\setlist[enumerate]{leftmargin=1.5em}
\setlist[itemize]{leftmargin=1.5em}
\definecolor{green}{rgb}{0,0.8,0} 
\newcommand{\Red}[1]{\begingroup\color{red} #1\endgroup} 
\newcommand{\Blue}[1]{\begingroup\color{blue} #1\endgroup} 
\newtheorem{thm}{Theorem}[section]
\newtheorem{cor}[thm]{Corollary}
\newtheorem{lem}[thm]{Lemma}
\newtheorem{defn}[thm]{Definition}
\theoremstyle{definition}
\theoremstyle{remark}
\newtheorem{rmk}[thm]{Remark}
\numberwithin{equation}{section}
\newcommand{\tld}[1]{\widetilde{#1}}
\newcommand{\br}[1]{\overline{#1}}
\newcommand{\nnrm}[1]{{\vert\kern-0.25ex\vert\kern-0.25ex\vert #1 
		\vert\kern-0.25ex\vert\kern-0.25ex\vert}}
\newcommand{\supp}{{\mathrm{supp}}\,}
\newcommand{\lap}{\Delta}
\newcommand{\rd}{\partial}
\newcommand{\nb}{\nabla}
\newcommand{\ift}{\infty}
\newcommand{\alp}{\alpha}
\newcommand{\bt}{\beta}
\newcommand{\gmm}{\gamma}
\newcommand{\Gmm}{\Gamma}
\newcommand{\dlt}{\delta}
\newcommand{\Lmb}{\Lambda}
\newcommand{\omg}{\omega}
\newcommand{\Omg}{\Omega}
\newcommand{\zt}{\zeta}
\newcommand{\bbN}{\mathbb N}
\newcommand{\bbR}{\mathbb R}
\newcommand{\bbT}{\mathbb T}
\newcommand{\bbZ}{\mathbb Z}
\newcommand{\calT}{\mathcal T}
\newcommand{\To}{\longrightarrow}
\begin{document}

\bibliographystyle{plain}
\title{\Large Stability of monotone, non-negative, and compactly supported vorticities \\ in the half cylinder and infinite perimeter growth for patches}
\author{Kyudong Choi\thanks{Department of Mathematical Sciences, Ulsan National Institute of Science and Technology, 50 UNIST-gil, Eonyang-eup, Ulju-gun, Ulsan 44919, Republic of Korea. Email: kchoi@unist.ac.kr}\and In-Jee Jeong\thanks{Department of Mathematical Sciences and RIM, Seoul National University, 1 Gwanak-ro, Gwanak-gu, Seoul 08826, Republic of Korea. Email: injee$ \_ $j@snu.ac.kr}\and 
	Deokwoo Lim\thanks{Department of Mathematical Sciences, Ulsan National Institute of Science and Technology, 50 UNIST-gil, Eonyang-eup, Ulju-gun, Ulsan 44919, Republic of Korea. Email: dwlim@unist.ac.kr} }

\date\today
\maketitle
\renewcommand{\thefootnote}{\fnsymbol{footnote}}
\footnotetext{\emph{2020 AMS Mathematics Subject Classification:} 76B47, 35Q35 }
\footnotetext{\emph{Key words:} 2D Euler; vorticity distribution; stability; center of mass; rearrangement; perimeter; large time behavior.}
\renewcommand{\thefootnote}{\arabic{footnote}}

\begin{abstract}
	We consider the incompressible Euler equations in the half cylinder $ \bbR_{>0}\times\bbT $. In this domain, any vorticity which is independent of $x_2$ defines a stationary solution. We prove that such a stationary solution is nonlinearly stable in a weighted $L^{1}$ norm involving the horizontal impulse, if the vorticity is non-negative and non-increasing in $x_1$. This includes {stability of cylindrical patches} 
	$ \lbrace x_{1}<\alp\rbrace,\; \alp>0 $. The stability result is based on the fact that such a profile is the unique minimizer of the horizontal impulse among all functions with the same distribution function. 
	Based on stability, we prove existence of vortex patches in the half cylinder that exhibit infinite perimeter growth in infinite time.
\end{abstract}\vspace{1cm}

\section{Introduction}

We consider the incompressible Euler equations in the half cylinder $ S_{+}:= \bbR_{> 0}\times \bbT $, where $ \bbT:=[-\pi,\pi)
$ is the torus, in vorticity form:
\begin{equation}\label{Eulereqs}
	\begin{split}
		\rd_{t}\omg+u\cdot\nb\omg&=0\quad\text{for}\quad (t,x)\in (0,\ift)\times S_{+},\\
		\omg|_{t=0}&=\omg_{0}\quad\text{for}\quad x\in S_{+}.
	\end{split}
\end{equation}
The velocity $ u $ in \eqref{Eulereqs} is determined from the vorticity $ \omg $ by the cylindrical Biot--Savart law, {imposing no-flow condition at the boundary $ \rd S_{+}:=\lbrace0\rbrace\times\bbT $, where $ u $ vanishes as $ x_{1} $ goes to infinity. The exact form of the Biot--Savart law will be discussed in Section 2.1.}
It can be shown that for an initial data $ \omg_{0}\in  L^{\ift}(S_{+}) $ with bounded 
support, which means it is compactly supported so $\omega_0  \in (L^{1}\cap L^{\ift}) $, there exists a unique global-in-time weak solution $ \omg\in L^{\ift}\big(0,\ift;(L^{1}\cap L^{\ift})(S_{+})\big) $ of \eqref{Eulereqs}, by following the arguments of Yudovich \cite{Yu63} in $ \bbR^{2} $.  By using the result of Kelliher \cite{Kelliher15}, Beichman--Denisov in the appendix of \cite{BeD17} demonstrated this in the full cylinder $ S:=\bbR\times\bbT $, which is a general case of $ S_{+} $ having an odd-symmetry in $ x_{1} $ .

\medskip

In this paper, we present two results: 
the stability of a compactly supported non-negative, monotone, and $ x_{2}- $independent vorticity, and the existence of patch-type solutions which exhibit infinite growth of perimeter in infinite time. This seems to be the first infinite perimeter growth result for patches defined in an unbounded domain.


\subsection{Main results}

We denote the weighted $ L^{1}- $norm on $ S_{+} $ with weight $ 1+x_{1} $ as the $ J_{1}- $norm on $ S_{+} $:
\begin{equation*}
	\big\|f\big\|_{J_{1}(S_{+})}:
	=\int_{S_{+}}(1+x_{1})\big|f(x)\big|dx.
\end{equation*}

Note that any $ x_{2}- $independent function is a stationary solution of \eqref{Eulereqs} in $ S_{+} $, since the $ x_{2}- $independence $ \rd_{x_{2}}\omg\equiv0 $ of $ \omg $ leads to $ u_{1}\equiv0 $, which gives us
\begin{equation*}
	u\cdot\nb\omg=
	u_{1}\rd_{x_{1}}\omg+u_{2}\rd_{x_{2}}\omg=
	0\cdot\rd_{x_{1}}\omg+u_{2}\cdot0=0.
\end{equation*}
The first result of this paper is about the stability of such a $ x_{2}- $independent solution when it  is non-negative and monotone with bounded support. Theorem \ref{thm_stabi} is analogous to that of radial and monotone solution in $ \bbR^{2} $ from the recent work \cite{ChL21}.

\begin{thm}\label{thm_stabi}
	For any constants $ L,M>0 $, there exists a constant $ C_{1}=C_{1}(L,M)>0 $ such that if $ \zt=\zt(x_{1}) $ is in $ L^{\ift}(S_{+}) $, non-negative, non-increasing, and compactly supported with
	\begin{equation}\label{eq_LM}
		\supp(\zt)\subset \lbrace x_{1}<L\rbrace
		,\quad \big\|\zt\big\|_{L^{\ift}(S_{+})}\leq M,
	\end{equation}
	then for any non-negative $ \omg_{0}\in L^{\ift}(S_{+}) $ with compact support
	, the corresponding solution $ \omg(t) $ of $ \eqref{Eulereqs} $ satisfies
	\begin{equation}\label{eq_J1estimate}
		\sup_{t\geq0}\big\|\omg(t)-\zt\big\|_{J_{1}(S_{+})}\leq C_{1}\big[\big\|\omg_{0}-\zt\big\|_{J_{1}(S_{+})}^{\frac{1}{2}}+\big\|\omg_{0}-\zt\big\|_{J_{1}(S_{+})}\big].
	\end{equation}
\end{thm}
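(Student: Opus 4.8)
The plan is to combine the three conservation laws carried by the flow with a single variational inequality expressing that the monotone profile minimizes the horizontal impulse. Write $I[f]:=\int_{S_+}x_1 f\,\ud x$ for the horizontal impulse. First I would record the conserved quantities for a non-negative solution $\omg(t)$ with $\omg_0\in(L^1\cap L^\ift)(S_+)$, $\omg_0\ge0$: (i) $\omg(t)$ is equimeasurable with $\omg_0$, so every super-level measure $|\{\omg(t)>s\}|=|\{\omg_0>s\}|$ is constant in $t$; (ii) the impulse $I[\omg(t)]=I[\omg_0]$ is conserved, since $\tfrac{\ud}{\ud t}I[\omg]=\int_{S_+}u_1\omg\,\ud x=0$ after integrating by parts and using both the no-flow condition $u_1|_{x_1=0}=0$ and periodicity in $x_2$ (all boundary terms drop); (iii) consequently $\|\omg(t)\|_{J_1(S_+)}=\int\omg_0\,\ud x+I[\omg_0]$ is itself conserved and finite. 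Establishing (ii) rigorously at the level of Yudovich-type weak solutions (by regularization) is a point requiring care.

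The key reduction uses the pointwise identity $|\omg-\zt|=(\omg-\zt)+2(\zt-\omg)_+$. Integrating against $1+x_1$ gives
\[ \|\omg(t)-\zt\|_{J_1(S_+)}=\mathcal D+2P(t),\qquad \mathcal D:=\|\omg(t)\|_{J_1}-\|\zt\|_{J_1},\quad P(t):=\int_{S_+}(1+x_1)(\zt-\omg(t))_+\,\ud x. \]
By (iii), $\mathcal D=\|\omg_0\|_{J_1}-\|\zt\|_{J_1}$ is conserved and $|\mathcal D|\le\|\omg_0-\zt\|_{J_1}$, so it is already of the desired linear size. The whole problem thus reduces to estimating the weighted deficit $P(t)$. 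This is the crucial structural point: $(\zt-\omg)_+$ is supported where $\zt>0$, i.e.\ inside $\{x_1<L\}$, so $P(t)$ only ever samples the bounded region where the weight is $\le 1+L$. This is exactly what produces a constant depending only on $L,M$, and what a naive comparison of $\omg(t)$ against the monotone rearrangement of $\omg_0$ fails to deliver (that rearrangement may sit arbitrarily far out, and $J_1$ is \emph{not} contracted by rearrangement).

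Next I would run a layer-cake analysis of $P(t)$. With $A_s=\{\omg(t)>s\}$ and $B_s=\{\zt>s\}=\{x_1<\rho(s)\}$, $\rho(s)\le L$, one has $P(t)=\int_0^\ift\int_{B_s\setminus A_s}(1+x_1)\,\ud x\,\ud s$, and $B_s\setminus A_s=\0$ for $s\ge M$. Set $\theta(s):=|A_s|-|B_s|$ (time-independent by (i)) and introduce the recentered per-level impulse $\hat d(s):=I[1_{A_s}]-I[1_{B_s}]-\rho(s)\theta(s)$, which equals $\int_{A_s\setminus B_s}(x_1-\rho(s))\,\ud x+\int_{B_s\setminus A_s}(\rho(s)-x_1)\,\ud x$ and is therefore non-negative; this is precisely the per-level form of the impulse-minimization principle. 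A one-dimensional ``bathtub'' estimate, using only that each $x_2$-slice has length $2\pi$, gives $|B_s\setminus A_s|^2\le 4\pi\int_{B_s\setminus A_s}(\rho(s)-x_1)\,\ud x\le 4\pi\,\hat d(s)$; since $B_s\setminus A_s\subset\{x_1<L\}$ this yields $\int_{B_s\setminus A_s}(1+x_1)\,\ud x\le 2\sqrt\pi\,(1+L)\,\hat d(s)^{1/2}$. Integrating over $s\in(0,M)$ and applying Cauchy--Schwarz produces $P(t)\le 2\sqrt\pi\,(1+L)\,M^{1/2}\,\mathcal G^{1/2}$, where $\mathcal G:=\int_0^\ift\hat d(s)\,\ud s$.

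Finally I would identify $\mathcal G$ as a conserved quantity controlled by the data. Since $\int_0^\ift I[1_{\{\omg(t)>s\}}]\,\ud s=I[\omg(t)]=I[\omg_0]$ and $\int_0^\ift\rho(s)\theta(s)\,\ud s$ is time-independent, $\mathcal G=I[\omg_0]-I[\zt]-\int_0^\ift\rho(s)\theta(s)\,\ud s$ does not depend on $t$; and evaluating $\hat d$ on the level sets of $\omg_0$ together with the elementary bounds $x_1-\rho(s)\le 1+x_1$ on $\{x_1\ge\rho(s)\}$ and $\rho(s)-x_1\le L(1+x_1)$ on $\{x_1<\rho(s)\}$ gives $\mathcal G\le(1+L)\|\omg_0-\zt\|_{J_1}$. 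Combining everything, $\|\omg(t)-\zt\|_{J_1}\le\|\omg_0-\zt\|_{J_1}+4\sqrt\pi\,(1+L)^{3/2}M^{1/2}\|\omg_0-\zt\|_{J_1}^{1/2}$, which is \eqref{eq_J1estimate} with $C_1=C_1(L,M)$. The main obstacle is the per-level geometric estimate combined with the insistence on a constant depending only on $(L,M)$: the splitting into the conserved sign part $\mathcal D$ and the deficit $P(t)$ confined to $\{x_1<L\}$ is what makes both the $\tfrac12$-power and the correct $L,M$-dependence emerge, and securing the non-negativity and the conservation of $\mathcal G$ is the heart of the argument.
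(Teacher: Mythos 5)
Your proof is correct, and it takes a genuinely different route from the paper's. The paper compares $\omega(t)$ with its own monotone rearrangement: it invokes nonexpansivity of rearrangement, the Marchioro--Pulvirenti-type estimate $\|f-f^{\ast}\|_{L^{1}}^{2}\leq C\|f\|_{L^{\infty}}\big[h(f)-h(f^{\ast})\big]$, and -- crucially -- a cut-off operator $\Gamma_{M+1}$ commuting with rearrangement, whose sole purpose is to replace the factor $\|\omega_0\|_{L^{\infty}}$ in that estimate by $M+1$ so that $C_1$ depends only on $(L,M)$. You never rearrange $\omega$ at all: the identity $|\omega-\zeta|=(\omega-\zeta)+2(\zeta-\omega)_{+}$ splits the $J_1$-error into a conserved signed part $\mathcal{D}$ and a deficit $P(t)$ which, by non-negativity of $\omega$, is supported in $\supp\zeta\subset\{x_1<L\}$ and, after the layer-cake decomposition, only involves levels $s<M=\|\zeta\|_{L^\infty}$; the per-level recentred impulse $\hat d(s)\geq0$ and the one-dimensional bathtub inequality then play exactly the role of the paper's Lemma \ref{lem_MPestimate}, with the $s$-range $(0,M)$ in your Cauchy--Schwarz step substituting for the $L^\infty$-factor and thereby making the cut-off operator unnecessary. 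The two arguments rest on the same conservation laws and the same variational principle (the monotone profile minimizes the horizontal impulse among equimeasurable functions), but yours is localized level-by-level against $\zeta$ rather than globally against $\omega^{\ast}$, which yields the $(L,M)$-dependence automatically and with explicit constants; the paper's route is more modular, reusing standard rearrangement lemmas at the cost of the extra commutation machinery. The one point you rightly flag -- justifying conservation of the horizontal impulse for Yudovich-class weak solutions -- is handled identically in the paper by citation, so it is not a gap specific to your approach.
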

\begin{rmk}\label{rmk_necessity}
	The non-negativity of $ \omg_{0} $ is necessary in our proof for technical reasons. One of the reasons is that the rearrangement of a function, which is frequently used in our proof, cannot be defined on the infinite domain $ S_{+} $ if the function contains a negative part. On the other hand, boundedness and compact support of $ \omg_{0} $ 
	are imposed to apply the standard  global well-posedness theory as well as to ensure that $J_1$-norm of $\omega_0$ is finite. 
\end{rmk}
This can be considered as  an extension of the work by Marchioro--Pulvirenti \cite{MaP85}, where they considered the Euler equations \eqref{Eulereqs}
in a \textit{bounded} strip  while we consider the equations in the \textit{infinite} strip. 
More precisely, the result in \cite[Theorem 1]{MaP85} shows the explicit $ L^{1}- $stability of the form \eqref{eq_J1estimate} of the $ x_{2}- $independent and monotone $ \zt\in L^{\ift} $ in a bounded strip $ \lbrace x_{1}< L\rbrace $, where $ C_{1} $ depends on the domain size $ L $ and the $ L^{\ift}- $norm of the initial perturbed data $ \omg_{0} $. By imposing the weight $ 1+x_{1} $ on the $ L^{1}- $norm, we were able to remove the dependence on $ \big\|\omg_{0}\big\|_{L^{\ift}} $  and to extend the domain into the infinite strip $ \lbrace x_{1}<\ift\rbrace $.

\medskip

\noindent This  stability result is used in showing our next result, which deals with the perimeter of a smooth vortex patch. In $ \bbR^{2} $, if the initial data is given as a patch $ \omg_{0}=1_{\Omg_{0}} $ for some open set $ \Omg_{0}\subset\bbR^{2} $, then the unique solution is of the form $ \omg(t)=1_{\Omg_{t}} $, where $ \Omg_{t}=\Phi(t,\Omg_{0}) $. Here, $ \Phi(t) $ is the flow map, which is the unique solution of the ODE
\begin{equation}\label{eq_flowmapext}
	\begin{split}
		\frac{d}{dt}\Phi(t,x)&=u\big(t,\Phi(t,x)\big), \qquad \Phi(0,x) =x,
	\end{split}
\end{equation}
where $ u(t ) $ is determined from $ \omg(t ) $ by the Biot--Savart law in $ \bbR^{2} $, given as $ u(t )=-\frac{1}{2\pi}\frac{x^{\perp}}{|x|^{2}}\ast\omg(t ) $. Moreover, it is well known if the boundary $ \rd\Omg_{0} $ of $ \Omg_{0} $ is connected and $C^\infty$--smooth, then $ \rd\Omg_{t} $ is connected and $C^\infty$--smooth as well (\cite{Chemin,BeCo93,Serfati}). It can be shown that the same holds for open patches with smooth boundaries in $ S_{+} $, by adapting the proof of Kiselev--Ryzhik--Yao--Zlat\v{o}s \cite{KRYZ16}.

\medskip

\noindent 
Recently, it was shown in \cite{CJ_perimeter} that there exists a patch on $ \bbR^{2} $ that has perimeter growth for finite time. The following theorem shows the existence of a patch on $ S_{+} $ where the growth of the perimeter is \textit{infinite} in infinite time (see Figure \ref{fig:patch}).


\begin{thm}\label{thm_perim}
	There exists a constant $ C_{2}>0 $ and a bounded, open 
	set $ \Omg_{0}\subset S_{+} $ with its smooth, connected 
	boundary $ \rd\Omg_{0}\subset \br{S_{+}}:=\lbrace x_{1}\geq0\rbrace $ that satisfies
	\begin{equation*}\label{eq_Omgbdry}
		\text{length}(\rd\Omg_{0})\leq 20,
		\quad \rd\Omg_{0}\cap\rd S_{+}\neq\emptyset,
	\end{equation*}
	such that for any $ t\geq0 $, 
	the solution $ 1_{\Omg_{t}} $ of \eqref{Eulereqs} with the initial data $ 1_{\Omg_{0}} $ satisfies
	\begin{equation*}\label{eq_iftgrowthofper}
		\text{length}(\rd\Omg_{t})\geq C_{2}t,\quad \forall t\geq0.
	\end{equation*}
	Here, the boundary $\partial S_+$ of $ S_{+} $ is the circle $ \lbrace 0\rbrace\times\bbT $. 
\end{thm}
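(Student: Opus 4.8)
The plan is to use the stability Theorem~\ref{thm_stabi} to freeze the solution near a cylindrical patch and then extract linear length growth from the differential rotation of the associated shear, anchored at the wall $\rd S_+$.

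\textbf{Choice of reference and initial patch.} Fix $\alp>0$ and take the stationary profile $\zt=1_{\lbrace x_1<\alp\rbrace}$, which is non-negative, non-increasing, bounded and compactly supported, so Theorem~\ref{thm_stabi} applies with $L=\alp$, $M=1$. With the convention of Section~2.1, its velocity is a pure shear $\br{u}=(\,0,\br{u}_2\,)$; the key qualitative facts are $\br{u}_1\equiv0$ and that $\br{u}_2$ is \emph{strictly monotone} on $(0,\alp)$ (say $\br{u}_2(0)=-\alp$, $\br{u}_2(\alp/2)=-\alp/2$, $\br{u}_2(\alp)=0$, constant beyond). For the initial patch I would take $\Omg_0$ to be the strip $\lbrace0<x_1<\alp\rbrace$ with a thin radial slit of width $2\eps$ removed along one meridian $\lbrace x_2=x_2^\ast\rbrace$ and the four corners rounded. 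Topologically $\Omg_0$ is a disk, so $\rd\Omg_0$ is a single smooth closed curve; it meets $\rd S_+$ along the remaining wall arc, has length $\aeq 4\pi+2\alp<20$ for small $\alp$, and satisfies $\|1_{\Omg_0}-\zt\|_{J_1(S_+)}\aleq \eps$. Crucially, each side of the slit is a boundary arc passing through every height $x_1\in(0,\alp)$ near $x_2=x_2^\ast$. By the patch well-posedness in $S_+$ (obtained by adapting \cite{KRYZ16}, as noted above) the solution remains $1_{\Omg_t}$ with $\Omg_t=\Phi(t,\Omg_0)$ and $\rd\Omg_t=\Phi(t,\rd\Omg_0)$ smooth and connected.

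\textbf{From $J_1$-stability to uniform velocity control.} Applying Theorem~\ref{thm_stabi} gives $\sup_{t\ge0}\|\omg(t)-\zt\|_{J_1(S_+)}\le C_1(\sqrt\eps+\eps)$. Writing $u(t)-\br{u}=\nb^\perp\lap^{-1}(\omg(t)-\zt)$ and using the standard $L^1\cap L^\ift\to L^\ift$ estimate for the cylindrical Biot--Savart kernel, whose local singularity is $\aeq|x|^{-1}$, I obtain
\[
	\|u(t)-\br{u}\|_{L^\ift(S_+)}\aleq \|\omg(t)-\zt\|_{L^1}^{1/2}\|\omg(t)-\zt\|_{L^\ift}^{1/2}\aleq \|\omg(t)-\zt\|_{J_1(S_+)}^{1/2}=:\dlt_1
\]
uniformly in $t$, and $\dlt_1$ can be made as small as desired by shrinking $\eps$.

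\textbf{Length from winding.} Parametrize $\rd\Omg_t$ by $s\in\bbT$ via $\gamma_t(s)=\Phi(t,\gamma_0(s))$ and let $\tld\gamma_{t,2}$ be a continuous lift of its $x_2$-component to $\bbR$. Since $\text{length}(\rd\Omg_t)\ge\int_{\bbT}|\rd_s\gamma_{t,2}|\,ds=\mathrm{TV}_s(\tld\gamma_{t,2})\ge|\tld\gamma_{t,2}(s_A)-\tld\gamma_{t,2}(s_B)|$ for any two parameters, choose $s_A$ on the wall arc (so $\gamma_{\tau,1}(s_A)\equiv0$, as $\rd S_+$ is invariant) and $s_B$ on a slit side at height $x_1=\alp/2$. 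Integrating $\dot\gamma_2=u_2$ and inserting the velocity estimate,
\[
	\tld\gamma_{t,2}(s_A)-\tld\gamma_{t,2}(s_B)=\int_0^t\!\big[\br{u}_2(0)-\br{u}_2(\gamma_{\tau,1}(s_B))\big]\,d\tau+O(\dlt_1 t)+O(1).
\]
If $\gamma_{\tau,1}(s_B)$ stays within $O(\dlt_1)$ of $\alp/2$, the bracket equals $-\alp/2+O(\dlt_1)$, giving $\text{length}(\rd\Omg_t)\ge(\alp/2-C\dlt_1)t\ge C_2 t$ with $C_2=\alp/4$ once $\eps$ is small.

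\textbf{Main obstacle: controlling the radial drift of $s_B$.} The gap is precisely the claim $\gamma_{\tau,1}(s_B)=\alp/2+O(\dlt_1)$ for \emph{all} $\tau$. The bound $\br{u}_1\equiv0$ only yields $|\dot\gamma_1|=|u_1-\br{u}_1|\le\dlt_1$, hence a drift $O(\dlt_1\tau)$, which fed back into the display above would produce a divergent $O(\dlt_1 t^2)$ error. I would instead control the drift through the approximate conservation of the stream function along trajectories: since $u=\nb^\perp\psi$, one has $\tfrac{d}{dt}\psi(t,\gamma_t)=\rd_t\psi(t,\gamma_t)=\rd_t\psi'(t,\gamma_t)$, where $\psi=\br\psi(x_1)+\psi'$ and $\br\psi'=\br{u}_2$ is strictly monotone on $(0,\alp)$. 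Thus the level sets of $\psi(t,\cdot)$ in the non-degenerate region are, by the implicit function theorem, graphs $x_1=\mathrm{const}+O(\dlt_1)$ over $\bbT$, and $\gamma_{\tau,1}$ stays pinned to $\alp/2+O(\dlt_1)$ provided $\int_0^t\rd_t\psi'(\tau,\gamma_\tau)\,d\tau$ remains bounded uniformly in $t$. This is the technical heart: at $x_1\approx\alp/2$ the sweeping speed $|\br{u}_2(\alp/2)|=\alp/2$ is bounded below, so the trajectory circulates in $x_2$ at a definite rate, and since the perturbation radial velocity $u_1'=-\rd_{x_2}\psi'$ has zero mean on each circle $\lbrace x_1=\beta\rbrace$, the oscillatory integral averages out and the net drift does not accumulate. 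Making this averaging estimate rigorous and uniform in time—against the $O(1)$, merely $J_1$-small, time-dependence of $\psi'$—is where the real work lies; the non-degeneracy of the shear away from the edge $x_1=\alp$, together with its maximal strength at the wall, is exactly what promotes finite-time growth to infinite-time growth.
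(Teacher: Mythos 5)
Your reduction to tracking two boundary points and your velocity estimate $\|u(t)-\br{u}\|_{L^\ift}\aleq\dlt_1$ are fine (they correspond to Lemma \ref{lem_uift} and Corollary \ref{cor_uift} in the paper), and the wall anchor $s_A$ is sound: the impermeability condition pins $\gamma_{\tau,1}(s_A)\equiv0$, so that point's vertical speed is $\br{u}_2(0)+O(\dlt_1)$ for all time. The genuine gap is exactly the one you flag: the claim $\gamma_{\tau,1}(s_B)=\alp/2+O(\dlt_1)$ uniformly in $\tau$. The only a priori information is $|\dot\gamma_{\tau,1}|\le\dlt_1$, which gives a drift $O(\dlt_1\tau)$; after time $\aeq\alp/\dlt_1$ the point $s_B$ may have migrated to the wall or past $x_1=\alp$, at which moment the shear differential vanishes and the argument stops. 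This is precisely why the finite-time method of \cite{CJ_perimeter} does not extend to infinite time, as the paper notes in its discussion of key ideas. Your proposed repair via near-conservation of the stream function along trajectories is not carried out, and it faces a real obstruction: $\rd_t\psi'$ is controlled in size only through the $J_1$-stability, not in its time regularity or its correlation with the trajectory, so $\int_0^t\rd_t\psi'(\tau,\gamma_\tau)\,d\tau$ has no reason to stay bounded; likewise the fact that $u_1'=-\rd_{x_2}\psi'$ has zero mean on each circle does not control its average along a trajectory that samples the circle nonuniformly while $\psi'$ itself evolves on an $O(1)$ time scale. As written, the argument proves only growth on a time interval of length $O(\dlt_1^{-1})$, not for all $t$.

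The paper circumvents this entirely by replacing your second material point $s_B$ with the \emph{vertical center of mass} $k(1_{\Phi_t(\Omg_0)})$ of the lifted patch in $\bbR_+^2$. Its growth rate is not a pointwise quantity along a trajectory but the average $\frac{d}{dt}k(1_{\Phi_t(\Omg_0)})=\frac{1}{2\pi}\int_{\Omg_t}u_2(t,x)\,dx$ (using conservation of area), which is compared to $\frac{1}{2\pi}\int_{\br{\Omg}}\br{u}_2\,dx=\frac12$ using only the $L^1$-closeness of $\Omg_t$ to $\br{\Omg}$ furnished by Theorem \ref{thm_stabi} --- no control of any individual interior trajectory is needed (Lemma \ref{lem_comgrowth}). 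Since the wall point rises at rate $\ge 1-C_4\dlt^{1/4}$ while the center of mass rises at rate $\le\frac12+C_5\dlt^{1/4}$, and since some boundary point $b$ always satisfies $b_2\le k(1_{\Phi_t(\Omg_0)})$ (else the average of $x_2$ over the patch would exceed itself), the vertical spread of $\rd\Phi_t(\Omg_0)$, hence $\mathrm{length}(\rd\Omg_t)$, grows at rate $\ge\frac14$ for all time. If you want to salvage your slit construction, the fix is to abandon the pinned interior point and adopt this center-of-mass comparison; otherwise you must supply the uniform-in-time averaging lemma, which is a substantial and unproven ingredient.
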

In an annulus, suggested by Nadirashvili \cite{Nadirashivili91}, two points each on the inner circle and the outer circle possess different angular velocities for all time. This difference leads to the growth of length of the curve on the annulus that connects the two points. Theorem \ref{thm_perim} shows that such growth of length can occur in an unbounded space having one connected boundary component as well.\\

\begin{figure}
	\centering
	\includegraphics[scale=0.25]{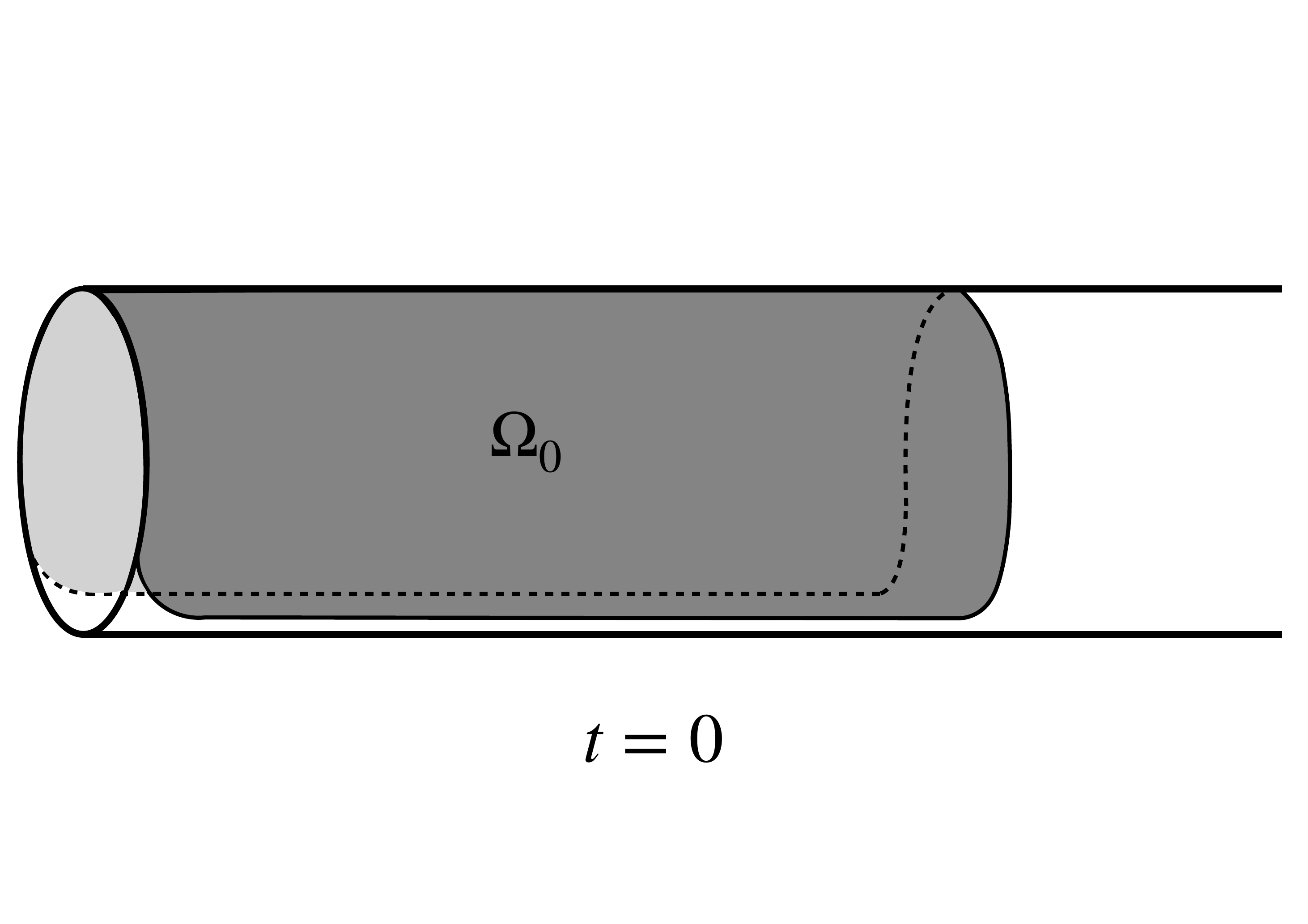} \, \, \, \includegraphics[scale=0.25]{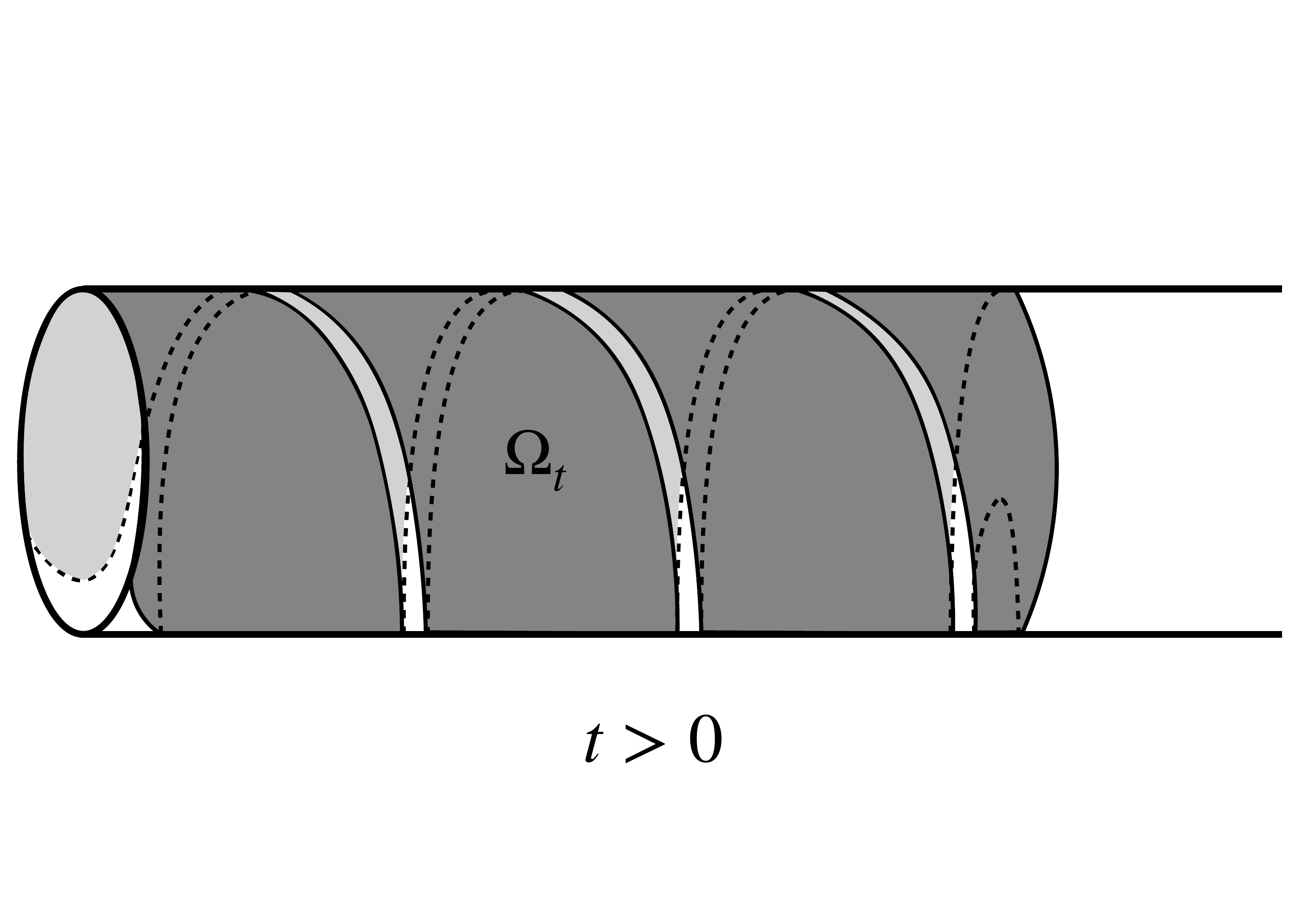}
	\caption{A schematic diagram for perimeter growth in Theorem \ref{thm_perim}} \label{fig:patch}
\end{figure}

Wan--Pulvirenti \cite{WaP85} showed a $ L^{1}- $stability of a disc patch $ 1_{B_{r}},\ r>0 $ among patches in a bounded disc $ B_{R},\ R\geq r $. More generally, \cite{MaP85} proved a $ L^{1}- $ stability of a radially symmetric and monotone solution among solutions in $ L^{\ift}(B_{R}),\ R>0 $ as well. Sideris--Vega \cite{SiV09} presented a $ L^{1}- $stability of a disc patch $ 1_{B_{r}},\ r>0 $ among patches with a compact support in $ \bbR^{2} $.  The most recent work \cite{ChL21} extended the work in \cite{MaP85} to show a weighted $ L^{1}- $stability of a non-negative, radially symmetric, and non-increasing solution among non-negative solutions in $ L^{\ift}(\bbR^{2}) $. In addition, \cite{AbC19}   proved an orbital stability, admitting translation, of the Lamb dipole by using a variational method. \cite{Ch20} showed a stability, up to translation, of the Hill's spherical vortex among non-negative  axi-symmetric solutions in 3D. \cite{ChJe22_3} presented an orbital stability in $ L^{1} $ of $ m- $fold Kelvin waves for symmetric perturbations and constructed an $ m- $fold Kelvin wave which shows perimeter growth for sufficiently long finite time.

\medskip

\noindent 
In the full strip $S$, \cite{BeD17} 
showed a stability of a cylindrical  patch $ 1_{\lbrace |x_{1}|<L\rbrace} $ for sufficiently large $ L>2 $. The existence and uniqueness  result in the strip is based on the work from 
\cite{Kelliher15}, in which it characterized various properties of a solution with bounded velocity and vorticity. \cite{ChDe19} estimated the upper bound of the horizontal support size of a non-negative solution in $ L^{\ift}(S) $ with compact support in $ S $. Bedrossian--Masmoudi \cite{BeMa15} proved asymptotic stability and inviscid damping of  Couette  flow in $ S $. 
Zillinger \cite{Zillinger17} showed linear inviscid damping for monotone shear flows.
Ionescu--Jia \cite{IonJia_cmp} showed inviscid damping of a shear flow in a bounded cylinder $ [0,1]\times\bbT $, along with that the support of the vorticity perturbation stays in the cylinder for all $ t\geq0 $. 

\medskip

\noindent \cite{Nadirashivili91} proved that in an annulus, there exists a smooth solution of \eqref{Eulereqs} which is $ C^{1}- $unstable among smooth solutions. \cite{CJ_perimeter} constructed a patch in $ \bbR^{2} $ in which the perimeter of the patch boundary grows for sufficiently large finite time. 
 \cite{ChJe22} constructed a solution, close to Lamb dipole, with compact support in $ \bbR^{2} $ which shows growth of its gradient and support size for infinite time. \cite{ChJe22_2} proved the existence of a solution that is close to the Hill's vortex in $ L^{1} (\mathbb{R}^3)$, its support being close to that of the Hill's vortex as well, which exhibits filamentation for infinite time. This, as corollaries, produced infinite growth of perimeter and gradient as well.

\subsection{Key ideas}

In order to prove Theorem \ref{thm_stabi}, we needed to reproduce the main ideas that were used in the proof of \cite[Lemma 3.3]{ChL21}, such as the conservation of the measure of level sets, the total mass, the angular impulse, and properties of symmetric decreasing rearrangement, including the rearrangement estimate established in 
\cite{MaP85}. We use the conservation of the level set measure, the total mass, and the horizontal impulse. The last quantity corresponds to the angular impulse in $ \bbR^{2} $.
Then we define the 
rearrangement on $ S_{+} $ to be $ x_{2}- $independent so that it is defined in the same way the symmetric decreasing rearrangement is defined on $ \bbR^{2} $. This implies that the nonexpansivity property and the rearrangement estimate on $ \bbR^{2} $ can be reproduced to $ S_{+} $. 
{In addition, we use the idea of \textit{cutting off} a non-negative function by a certain height. One of the important property of the cut-off operator $ \Gmm_{\alp},\ \alp>0 $, which is defined in section 2.4, is that it commutes with the rearrangement operator $ \ast $: $$ (\Gmm_{\alp}f)^{\ast}=\Gmm_{\alp}(f^{\ast}), $$ for any non-negative $ f\in L^{1} $. The use of this cut-off operator makes our stability result be independent of the $ L^{\ift}- $norm of $ \omg_{0} $.}
\begin{rmk}
	Using the conservation of the total mass and the horizontal impulse, we can reproduce the simplest stability of the stationary patch $ 1_{D} $ of the unit disc $ D{=B_{1}(0)} $ in $ \bbR^{2} $ from \cite{SiV09} and Dritschel \cite{Dr88} to the stationary patch $ 1_{\br{\Omg}} $ with $$ \br{\Omg}:=\lbrace x_{1}<1\rbrace\subset S_{+} :$$
	\begin{equation*}\label{eq_simpstabi}
		\begin{split}
			\int_{\Omg_{t}\triangle \br{\Omg}}|1-x_{1}|dx&=\int_{\Omg_{t}\setminus \br{\Omg}}(1-x_{1})dx+\int_{\br{\Omg}\setminus\Omg_{t}}(x_{1}-1)dx=\int_{\Omg_{t}}(1-x_{1})dx+\int_{\br{\Omg}}(x_{1}-1)dx\\
			&=\int_{\Omg_{0}}(1-x_{1})dx+\int_{\br{\Omg}}(x_{1}-1)dx=\int_{\Omg_{0}\triangle \br{\Omg}}|1-x_{1}|dx,\quad t\geq0.
		\end{split}
	\end{equation*} However, note that the weight $|1-x_1|$ vanishes when $x_1=1$; this norm is not strong enough to control the flow map pointwise. 
\end{rmk}

\medskip

\noindent We move on to Theorem \ref{thm_perim}. The idea in \cite{CJ_perimeter} was using the $ L^{1}- $stability of the circular patch $ 1_{D} $ 
and its corresponding velocity $ u_{D} $, which can be derived explicitly. The fact that 
the {Biot--Savart} 
kernel $ K $ in $ \bbR^{2} $ is roughly $ \frac{1}{|x|} $ was used to show that the difference between velocities $ u_{\Omg_{t}} $ of $ 1_{\Omg_{t}} $ and $ u_{D} $ of $ 1_{D} $ is uniformly bounded by the $ L^{1}- $difference of $ 1_{\Omg_{0}} $ and $ 1_{D} $ up to the power of $ \frac{1}{4} $. 
The method used in the proof required two points 
that are 
located on $ \rd\Omg_{0} $ in the initial time and move along the flow map to have certain amount of radial distance with each other. 
If that is the case, then the tangential velocities of those two points differ. This difference creates the perimeter growth of the boundary.
However, for infinite time, there is no reason for the radial distance {in the future} to be nonzero, since the radial velocity of each point need not {vanish}. Thus, in this method, the perimeter growth of $ \rd\Omg_{t} $ is limited to finite time where the distance is kept. This is why we use a different approach in Theorem \ref{thm_perim}.

\medskip 

\noindent Similarly as in $ \bbR^{2} $, the unique solution of \eqref{Eulereqs} with patch-type initial data $ \omg_{0}=1_{\Omg_{0}} $ is the patch $ \omg(t)=1_{\Omg_{t}} $, where $ \Omg_{t} $ is the image of $ \Omg_{0} $ through the flow map. 
As a way of understanding this 
patch $ 1_{\Omg_{t}} $, 
we can go through the following process. First, we {periodically} extend the velocity field $ u(t) $ of $ 1_{\Omg_{t}} $ to the half plane $ \bbR_{+}^{2}:=\bbR_{>0}\times\bbR  $. Then, denoting this extension as $ u_{ext}(t) $, we generate the flow map $ \Phi(t) $ in $ \bbR_{+}^{2} $ from $ u_{ext}(t) $. Then we project $ \Omg_{0}\subset S_{+} $ through the flow map $ \Phi(t) $ to obtain the image $ \Phi(t,\Omg_{0})=\Phi_{t}(\Omg_{0}) $. Finally, $ \Omg_{t}\subset S_{+} $ is obtained by projecting $ \Phi_{t}(\Omg_{0}) $ onto $ S_{+} $ via the map $ Q : \bbR_{+}^{2} \To S_{+} $, which is defined as $$ Q(x_{1}, x_{2}):=(x_{1}, x_{2}-2n\pi), $$ for some $ n\in\bbZ $ that satisfies $ x_{2}\in[(2n-1)\pi,(2n+1)\pi) $. Such a framework is depicted in Figure \ref{fig_patches}.

\begin{figure}
	\centering
	\includegraphics[scale=0.52]{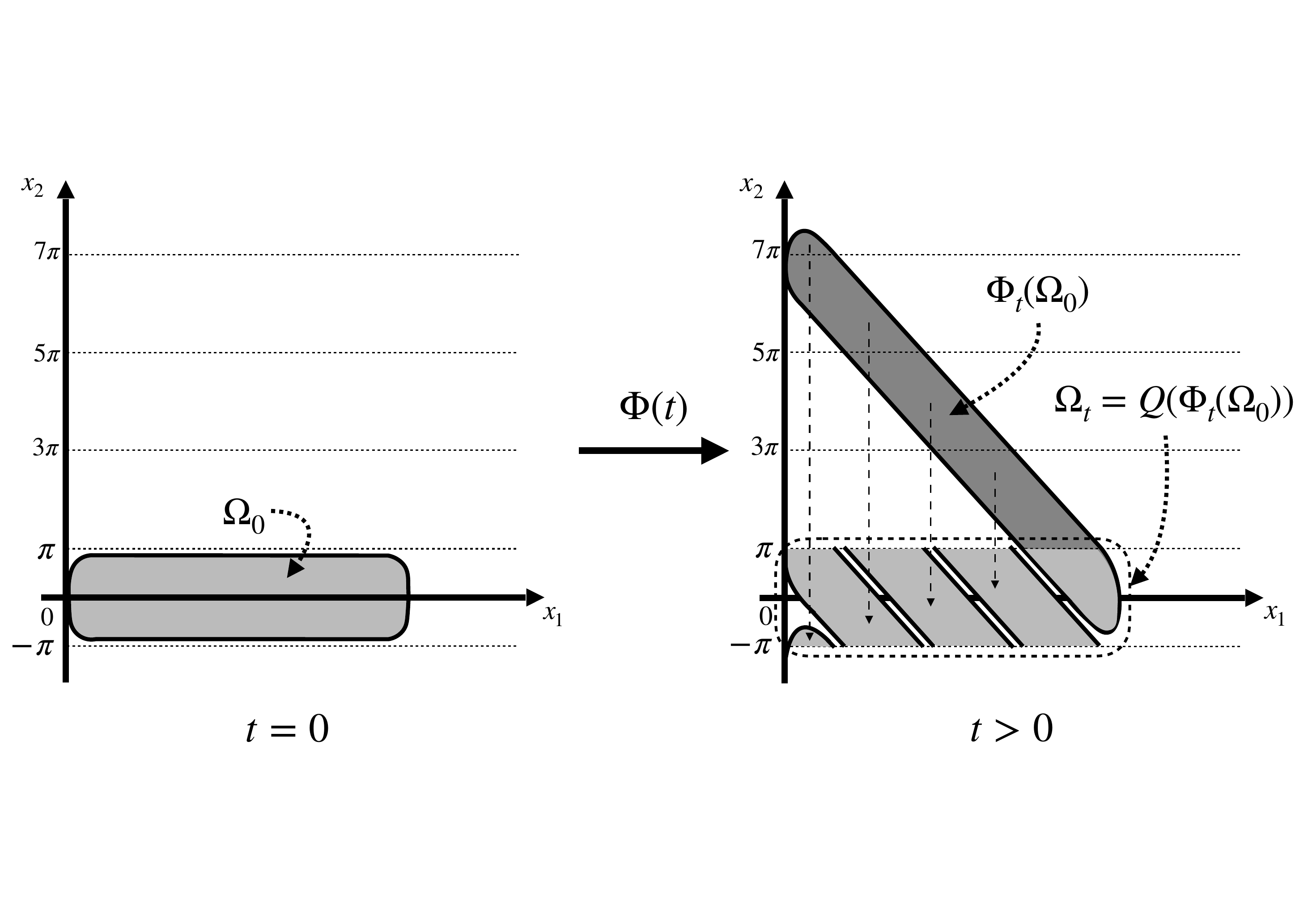}
	\vspace{-1cm}
	\caption{A schematic diagram of understanding $ \Omg_{t}\subset S_{+} $} \label{fig_patches}
\end{figure}

\medskip
We require the boundary $ \rd\Omg_{0} $ of the initial patch $ \Omg_{0} $ to be \textit{connected} and to have at least one point that \textit{intersects with} $ \rd S_{+}=\{0\}\times\mathbb{T} $, the boundary of the domain $ S_{+} $ (cf.  the boundary $\partial \br\Omega=\{0,1\}\times \mathbb{T}$ of the (steady) set $\br\Omega$ is not connected.).
Then we use the fact that the point 
stays on {the $ x_{2}- $axis} $ \rd \bbR_{+}^{2} $ along the flow map, that is, $ \rd\Phi_{t}(\Omg_{0})\cap\rd \bbR_{+}^{2}\neq\emptyset $ for any $ t\geq0 $. This is because the horizontal velocity of any point on $ \rd \bbR_{+}^{2} $ is zero. 
Then we use the important fact that the vertical velocity of that point is strictly greater than the growth rate of the vertical center of mass of the patch $ 1_{\Phi_{t}(\Omg_{0})} $ on $ \bbR_{+}^{2} $:
$$ u_{2}\big(t,\Phi(t,x_{0})\big)-\frac{d}{dt}\frac{1}{|{\Phi_{t}}(\Omg_{0})|}\int_{\Phi_{t}(\Omg_{0})}x_{2}dx\geq C>0,\quad x_{0}\in\rd\Omg_{0}\cap\rd S_{+},\quad t\geq0, $$
which is depicted in Lemma \ref{lem_comgrowth}. In this lemma, we show that the rate of $ 1_{\Phi_{t}(\Omg_{0})} $ is close to the rate of $ 1_{\br{\Phi}_{t}(\br{\Omg})} $, 
the half of the vertical velocity of the point on $ {\rd\br{\Phi}_{t}(\br{\Omg})\cap}\rd\bbR_{+}^{2} $. {Here, $ \br{\Phi}(t) $ is the flow map induced from the periodic extension of $ \br{u} $, the velocity of the steady solution $ 1_{\br{\Omg}} $.} 
Due to this difference in vertical velocities, which is maintained for \textit{every} time, the vertical distance between the point on $ \rd\Phi_{t}(\Omg_{0})\cap\rd\bbR_{+}^{2} $ and some point on $ \rd\Phi_{t}(\Omg_{0}) $ near the vertical center of mass of $ 1_{\Phi_{t}(\Omg_{0})} $
is bigger than $Ct$. This distance works as a lower bound of $ \text{length}(\rd\Omg_{t}) $ in Theorem \ref{thm_perim}.
To prove this, we 
use the stability \eqref{eq_J1estimate} from Theorem \ref{thm_stabi} applied on the bounded strip $ 1_{\br{\Omg}} $, and the velocity $ \br{u} $ corresponding to $ 1_{\br{\Omg}} $ that is calculated explicitly in Section 3.1.

\subsection*{Organization of the paper} 
In Section 2, we define the cylindrical Biot--Savart law and list the conserved quantities of $ \omg(t) $, give the definition of rearrangement and its properties, define the cut-off operator, and move on to the proof of Theorem \ref{thm_stabi}. We begin Section 3 by calculating the velocity field $ \br{u} $ of the patch $ 1_{\br{\Omg}} $ and the total mass of $ 1_{\br{\Omg}} $. 
After defining the map $ Q $ from the , we define periodic extensions of $ u(t) $ and $ \br{u} $, and induce flow maps $ \Phi(t) $ and $ \br{\Phi}(t) $ from these extensions. Then we present that $ \Omg_{t} $ is the image of $ \Phi(t,\Omg_{0})=\Phi_{t}(\Omg_{0}) $ through the map $ Q $. 
After explicitly calculating the growth rate of the vertical center of mass of the patch $ 1_{\br{\Phi}_{t}(\br{\Omg})} $ on $ \bbR_{+}^{2} $, 
we prove several lemmas concerning the control of the difference of vertical velocities and the growth rates of vertical center of mass between $ 1_{\Phi_{t}(\Omg_{0})} $ and $ 1_{\br{\Phi}_{t}(\br{\Omg})} $. 
We finish the paper with the proof of Theorem \ref{thm_perim}. 

\section{Stability}

\subsection{The cylindrical Biot--Savart law}

We shall specify the exact form of the cylindrical Biot--Savart law, {considered in this paper,} which gives the velocity $ u $ in \eqref{Eulereqs} from the vorticity $ \omg $. We recall the Biot--Savart law for the full cylinder $ S=\bbR\times\bbT $ (\cite[App. A]{BeD17}). The kernel $ K $ for $ S_{+} $ can be derived simply by imposing the odd symmetry in $ x_{1} $ to $ \omg(x_1,x_2)$. 
 Then 
$ u $ is given as (see \cite{BeD17} for detail)
\begin{equation}
	\begin{split}
		u(x)&=\nb^{\perp}\Psi(x)
		=\int_{S_{+}}K(x,y)
		\omg(y)dy,
		\label{BiotSavart}
	\end{split}
\end{equation}
where $ \Psi $ is the stream function defined as
\begin{equation*}\label{eq_streamftn}
	\begin{split}
		\Psi(x)&=\int_{S_{+}}\Gmm(x,y)\omg(y)dy,\quad\Gmm(x,y)=\Gmm_{S}(x-y)-\Gmm_{S}(x+\br{y}),\\
		\Gmm_{S}(x)&=-\frac{1}{4\pi}\ln(\cosh x_{1}-\cos x_{2}),\quad\br{y}=(y_{1},-y_{2}),
	\end{split}
\end{equation*}
and the kernel $ K=(K_{1},K_{2}) $ is given by
\begin{equation}\label{eq_kernelK}
	\begin{split}
		K(x,y)&=\nb_{x}^{\perp}\Gmm(x,y)=
		K_{S}(x-y)-K_{S}(x+\br{y}),\\
		K_{S}(x)&=\nb^{\perp}\Gmm_{S}(x)=\frac{(\sin x_{2},\,-\sinh x_{1})}{4\pi(\cosh x_{1}-\cos x_{2})}.
	\end{split}
\end{equation}
Here, when $ \omg\in L^{\ift}(S_{+}) $ with bounded support is considered, $ \Psi $ solves the elliptic problem
\begin{equation}\label{eq_elliptic}
	-
	\lap\Psi=\omg\quad\mbox{in}\quad S_+,\quad \Psi|_{x_{1}=0}\equiv0,\quad \lim\limits_{x_{1}\to\ift}\rd_{x_{1}}\Psi(x)=0,\quad |\Psi(x)|\leq C(x_{1}+1),
\end{equation}
and $ K_{S} $ is the cylindrical Biot--Savart kernel in $ S $. 
The kernel $ K $ in $ S_{+} $ has the form \eqref{eq_kernelK} due to the odd-symmetry of $ \omg $ in $ x_{1} $. Also,
the kernel $ K $ has the following rough estimate
\begin{equation}\label{eq_kernelsest}
	|K_{1}(x,y)|\leq \frac{C_{0}}{|x-y|},\quad |K_{2}(x,y)|\leq \frac{C_{0}}{|x-y|}+1,\quad \text{if}\quad -\pi\leq x_{2}-y_{2}\leq\pi,
\end{equation}
with some constant $ C_{0}>0 $.

\subsection{Conserved quantities}

For a function $ f $ defined on $ S_{+} $ where both $ f $ and $ x_{1}f $ are in $ L^{1}(S_{+}) $, we denote
\begin{equation*}
	\begin{split}
		h(f)&:=\int_{S_{+}}x_{1}f(x)dx
	\end{split}
\end{equation*}
as the horizontal impulse of $ f $. To begin with, we let $ \omg_{0} $ be in $ L^{\ift}(S_{+}) $ with a bounded support in $ S_{+} $. Here, we assume a impermeable wall $ \lbrace0\rbrace\times\bbT $, which gives a tangential boundary condition for $ u $ at $ x_{1}=0 $. Then there exists the unique weak solution $ \omg\in L^{\ift}\big(0,\ift;(L^{1}\cap L^{\ift})(S_{+})\big) $ such that the following quantities are conserved for all times; 
\begin{equation*}\label{eq_consqu}
	\begin{split}
		(a)\;\;&\text{the total mass }\;\int_{S_{+}}\omg(t,x)dx,\\
		(b)\;\;&\text{the horizontal impulse }\;h(\omg(t)),\\
		(c)\;\;&\text{the measure of a level set }\;\big|\lbrace x\in S_{+} : \omg(t,x)>\alp\rbrace\big|,\quad \forall\alp>0,\\
		(d)\;\;&\text{the }L^{p}-\text{norm }\;\big\|\omg(t)\big\|_{L^{p}(S_{+})},\quad p\in[1,\ift].
	\end{split}
\end{equation*}
Such conservation laws are needed to prove the $ J_{1}- $ stability \eqref{eq_J1estimate}.
The existence and uniqueness can be found in \cite{Kelliher15} and \cite{BeD17}. The total mass is conserved because of the divergence-free condition of $ u(t) $, and the conservation of 
the horizontal impulse is due to the odd-symmetry of the cylindrical Biot-Savart kernel $ K_{1} $, which is presented in \eqref{eq_kernelK}.
We refer the reader to \cite[Prop. 2.1]{BeD17} for details. The conservation of the measure of each level set of $ \omg(t) $ is because the flow map is measure-preserving in time. This leads to the conservation of the $ L^{p}- $norm of $ \omg(t) $, since the $ L^{p}- $norm can be represented as the integral of the measure of each level set.

\subsection{Rearrangement and its estimate}

First, we define the rearrangement of a measurable set with finite measure in $ S_{+} $ and the rearrangement of a function in $ L^{1}(S_{+}) $, analogously to \cite[Sec. 3.3]{LiLAn}.\\
\begin{defn}\label{def_rearr} For a measurable set $ \Omg\subset S_{+} $ with finite measure $ |\Omg|<\ift $, we define the rearrangement $ \Omg^{\ast} $ of $ \Omg $ as the bounded strip that has the same measure as $ \Omg $; that is,  $\Omg^{\ast}:=\lbrace x_{1}<\frac{|\Omg|}{2\pi}\rbrace
	$.
In addition, for a non-negative function $ f\in L^{1}(S_{+}) $, we define the rearrangement $ f^{\ast} $ of $ f $ as the non-negative function that satisfies
\begin{equation*}
	\lbrace f^{\ast}>\alp\rbrace=\lbrace f>\alp\rbrace^{\ast},\quad \forall\alp>0.\label{levelset}
\end{equation*}
\end{defn} By the above definition, $ f^{\ast} $ is $ x_{2}- $independent, non-increasing in $ x_{1} $, and satisfies
\begin{equation}
\big|\lbrace f^{\ast}>\alp\rbrace\big|=\big|\lbrace f>\alp\rbrace\big|,\quad \forall\alp>0.\label{levelsetmeas}
\end{equation}
From \eqref{levelsetmeas}, we have
\begin{equation}
\big\|f^{\ast}\big\|_{L^{1}(S_{+})}=\big\|f\big\|_{L^{1}(S_{+})}, \qquad h(f^{\ast})\leq h(f).\label{eq_hfast}
\end{equation} 

There are two properties of rearrangement that we use in this section. The first is the nonexpansivity, which is introduced in the following lemma.

\begin{lem}[Nonexpansivity]\label{lem_nonexp}
	Let $ f,g\in L^{1}(S_{+}) $ be non-negative functions, and let $ g  $ satisfy $ g^{\ast}=g $. Then we have
	\begin{equation*}
		\big\|f^{\ast}-g\big\|_{L^{1}(S_{+})}\leq\big\|f-g\big\|_{L^{1}(S_{+})}.\label{nonexp}
	\end{equation*}
\end{lem}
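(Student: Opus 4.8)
The plan is to reduce the $L^1$ statement to a one-parameter family of elementary set inequalities at the level of superlevel sets, exploiting that here $g$ is already its own rearrangement and hence has the simplest possible level sets. The starting point is the pointwise identity, valid for non-negative values $a,b\ge0$,
\begin{equation*}
	|a-b| = a + b - 2\min(a,b).
\end{equation*}
Applying this to the pairs $(f,g)$ and $(f^{\ast},g)$ and integrating over $S_{+}$, and then using the equimeasurability $\|f^{\ast}\|_{L^{1}(S_{+})}=\|f\|_{L^{1}(S_{+})}$ from \eqref{eq_hfast}, the two squared norms differ only through the $\min$ terms:
\begin{equation*}
	\big\|f-g\big\|_{L^{1}(S_{+})} - \big\|f^{\ast}-g\big\|_{L^{1}(S_{+})} = 2\bigg(\int_{S_{+}}\min(f^{\ast},g)\,dx - \int_{S_{+}}\min(f,g)\,dx\bigg).
\end{equation*}
Thus the claimed nonexpansivity is \emph{equivalent} to the inequality $\int_{S_{+}}\min(f^{\ast},g)\,dx \ge \int_{S_{+}}\min(f,g)\,dx$, and it suffices to establish the latter.

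For this I would use the layer-cake (bathtub) representation $\min(a,b)=\int_{0}^{\ift}\mathbf{1}_{\{a>t\}}\mathbf{1}_{\{b>t\}}\,dt$ together with Tonelli's theorem to write, for any non-negative measurable $\phi,\psi$,
\begin{equation*}
	\int_{S_{+}}\min(\phi,\psi)\,dx = \int_{0}^{\ift}\big|\{\phi>t\}\cap\{\psi>t\}\big|\,dt.
\end{equation*}
Hence it is enough to prove, for a.e.\ $t>0$, the single level-set inequality
\begin{equation*}
	\big|\{f^{\ast}>t\}\cap\{g>t\}\big| \ge \big|\{f>t\}\cap\{g>t\}\big|.
\end{equation*}

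The crux is that both superlevel sets on the left are \emph{nested strips}. Since $g^{\ast}=g$, the function $g=g(x_{1})$ is non-increasing, so $\{g>t\}=\{x_{1}<s(t)\}$ is a half-cylinder strip; and by Definition \ref{def_rearr} the set $\{f^{\ast}>t\}=\{f>t\}^{\ast}=\{x_{1}<r(t)\}$ is a strip with $\big|\{f^{\ast}>t\}\big|=\big|\{f>t\}\big|$. Because two strips of the form $\{x_{1}<c\}$ are always nested, their intersection is the smaller one, giving
\begin{equation*}
	\big|\{f^{\ast}>t\}\cap\{g>t\}\big| = \min\!\big(\big|\{f>t\}\big|,\,\big|\{g>t\}\big|\big) \ge \big|\{f>t\}\cap\{g>t\}\big|,
\end{equation*}
where the final inequality is the trivial bound $|A\cap B|\le\min(|A|,|B|)$ applied to $A=\{f>t\}$, $B=\{g>t\}$. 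Integrating in $t$ and tracing back through the identities yields the lemma. The only points requiring a little care are the justification of the Tonelli exchange (guaranteed since $0\le\min(f,g)\le f\in L^{1}(S_{+})$, and likewise for $f^{\ast}$) and the fact that $g$ being $x_{2}$-independent and non-increasing forces its superlevel sets to be strips up to null sets; these are routine. I do not expect a genuine obstacle here, as the usual difficulty in proving nonexpansivity of symmetric rearrangement—controlling intersections of a \emph{general} rearranged level set with another—collapses in this setting precisely because $g$ is already radial/monotone, so every relevant set is a strip.
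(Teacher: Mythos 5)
Your proof is correct and complete. The paper does not actually prove this lemma itself --- it only cites Lieb--Loss [Sec.~3.5] --- and your argument is precisely that standard layer-cake proof (reduce via $|a-b|=a+b-2\min(a,b)$ and equimeasurability to the level-set inequality $|\{f^{\ast}>t\}\cap\{g>t\}|\ge|\{f>t\}\cap\{g>t\}|$), correctly specialized to the present setting where $g^{\ast}=g$ makes every relevant superlevel set a nested strip, so the key set inequality collapses to the trivial bound $|A\cap B|\le\min(|A|,|B|)$; the only blemish is the phrase ``squared norms,'' which should just read ``norms.''
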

We refer the reader to \cite[Sec. 3.5]{LiLAn} for the proof of a general case where $ g^{\ast}=g $ need not hold. This is the extension of \cite[Lemma 2]{MaP85} in a bounded strip $ \lbrace x_{1}<a\rbrace,\; a>0 $. 
Recall that in Theorem \ref{thm_stabi}, we required the stationary solution $ \zt=\zt(x_{1}) $ to be non-increasing. This is because it satisfies $ \zt^{\ast}=\zt $, and this fact is used in applying the above lemma with $ g=\zt $ in the proof of Theorem \ref{thm_stabi}.

The second is the rearrangement estimate \eqref{estimate_MP}, which 
is a refinement of \eqref{eq_hfast}. Such an estimate can be found in 
\cite[Lemma 1]{MaP85} when the fluid lies on the bounded strip $ \lbrace x_{1}<a\rbrace
,\; a<\ift $. 

\begin{lem}\label{lem_MPestimate}
	For a non-negative function $ f\in L^{\ift}(S_{+}) $ satisfying $ h(f)<\ift $, we have 
	\begin{equation}
		\big\|f-f^{\ast}\big\|_{L^{1}(S_{+})}^{2}\leq C\big\|f\big\|_{L^{\ift}(S_{+})}\big[h(f)-h(f^{\ast})\big].\label{estimate_MP}
	\end{equation} with some universal constant $C>0$. 
\end{lem}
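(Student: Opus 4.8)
The plan is to reduce the statement to the case of indicator functions (patches) via the layer-cake representation, and to settle the patch case by a direct extremal (``bathtub'') computation for the impulse.

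First I would record the patch estimate. Let $ A\sbeq S_{+} $ be measurable with $ |A|<\ift $, so that by Definition \ref{def_rearr} its rearrangement is the strip $ A^{\ast}=\lbrace x_{1}<\ell\rbrace $ with $ \ell=|A|/(2\pi) $. Writing $ m:=|A\setminus A^{\ast}|=|A^{\ast}\setminus A| $ (equal because $ |A|=|A^{\ast}| $), one has $ \big\|1_{A}-1_{A^{\ast}}\big\|_{L^{1}(S_{+})}=2m $. The key point is that $ A\setminus A^{\ast}\sbeq\lbrace x_{1}\geq\ell\rbrace $ while $ A^{\ast}\setminus A\sbeq\lbrace x_{1}<\ell\rbrace $. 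Minimizing $ \int_{A\setminus A^{\ast}}x_{1}\,dx $ among measure-$ m $ subsets of $ \lbrace x_{1}\geq\ell\rbrace $ (mass packed toward $ x_{1}=\ell $, i.e. the strip $ \lbrace\ell\leq x_{1}<\ell+m/(2\pi)\rbrace $) and maximizing $ \int_{A^{\ast}\setminus A}x_{1}\,dx $ among measure-$ m $ subsets of $ \lbrace x_{1}<\ell\rbrace $ (the strip $ \lbrace\ell-m/(2\pi)\leq x_{1}<\ell\rbrace $, which lies in $ S_{+} $ since $ m\leq|A^{\ast}|=2\pi\ell $) yields
\[
h(1_{A})-h(1_{A^{\ast}})=\int_{A\setminus A^{\ast}}x_{1}\,dx-\int_{A^{\ast}\setminus A}x_{1}\,dx\geq\bb(\ell m+\tfrac{m^{2}}{4\pi}\bb)-\bb(\ell m-\tfrac{m^{2}}{4\pi}\bb)=\tfrac{m^{2}}{2\pi}.
\]
Hence $ \big\|1_{A}-1_{A^{\ast}}\big\|_{L^{1}}^{2}=4m^{2}\leq 8\pi\,[h(1_{A})-h(1_{A^{\ast}})] $, which is \eqref{estimate_MP} for patches with $ C=8\pi $.

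Next I would lift this to $ f $ by slicing. I would first note that $ h(f)<\ift $ together with $ f\in L^{\ift} $ forces $ f\in L^{1}(S_{+}) $: split into $ \lbrace x_{1}<1\rbrace $, a strip of finite measure on which $ f\leq M:=\|f\|_{L^{\ift}} $, and $ \lbrace x_{1}\geq1\rbrace $, where $ f\leq x_{1}f $; thus $ f^{\ast} $ is defined and each $ A_{s}:=\lbrace f>s\rbrace $ has finite measure. Using the layer-cake identities $ f=\int_{0}^{M}1_{A_{s}}\,ds $ and $ f^{\ast}=\int_{0}^{M}1_{A_{s}^{\ast}}\,ds $ (with $ (1_{A_{s}})^{\ast}=1_{A_{s}^{\ast}} $ and $ \|f^\ast\|_{L^\ift}=M $), the triangle inequality and Tonelli give
\[
\big\|f-f^{\ast}\big\|_{L^{1}}\leq\int_{0}^{M}\big\|1_{A_{s}}-1_{A_{s}^{\ast}}\big\|_{L^{1}}\,ds\leq\int_{0}^{M}\sqrt{8\pi\,[h(1_{A_{s}})-h(1_{A_{s}^{\ast}})]}\,ds.
\]
Applying Cauchy--Schwarz on $ [0,M] $ and then the Tonelli identities $ \int_{0}^{M}h(1_{A_{s}})\,ds=h(f) $ and $ \int_{0}^{M}h(1_{A_{s}^{\ast}})\,ds=h(f^{\ast}) $ (valid since $ f,f^{\ast}\leq M $), I obtain $ \big\|f-f^{\ast}\big\|_{L^{1}}^{2}\leq 8\pi\,M\,[h(f)-h(f^{\ast})] $, which is \eqref{estimate_MP} with $ C=8\pi $.

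I expect the main obstacle to be the patch estimate itself: one must verify the two extremal impulse bounds carefully (the rearrangement is the impulse minimizer among equimeasurable sets, and the quadratic gap $ m^{2}/(2\pi) $ is precisely what the bathtub principle produces), while checking the admissibility constraint $ m\leq2\pi\ell $ that keeps the comparison strip inside $ S_{+} $. The remaining steps---the $ L^{1} $ integrability of $ f $, the layer-cake identities, and the two Tonelli interchanges feeding Cauchy--Schwarz---are routine once the slicing is set up; it is worth noting that the factor $ \|f\|_{L^{\ift}} $ enters exactly through the length $ M $ of the interval over which Cauchy--Schwarz is applied, which is the mechanism by which \eqref{estimate_MP} refines the crude bound $ h(f^{\ast})\leq h(f) $ in \eqref{eq_hfast}.
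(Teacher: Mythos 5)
Your proposal is correct and follows essentially the same route as the paper: the quadratic gap $\tfrac{m^{2}}{2\pi}$ for patches via an extremal comparison of the impulse, then a reduction of the general case to patches by slicing the level sets, with Cauchy--Schwarz over the range of levels producing the factor $\|f\|_{L^{\ift}}$. The only difference is cosmetic: you use the continuous layer-cake decomposition $f=\int_{0}^{M}1_{A_{s}}\,ds$ with Tonelli, whereas the paper discretizes into simple functions with equally spaced levels and passes to the limit by dominated convergence; both yield the same constant $C=8\pi$.
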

This tells us that $ f^{\ast} $ is the unique minimizer of the horizontal impulse among every function that has the same measure of each level set with $ f $. 

\begin{proof}
	Without loss of generality, we assume $ \left \|f\right \|_{L^{\ift}}=1 $. First, let us consider the case $ f=1_{\Omg} $, where $ \Omg\subset S_{+} $ is a measurable set with finite measure. 
	Note that from $ |\Omg|=|\Omg^{\ast}| $, for some $ \bt\geq0 $, we have
	\begin{equation*}
		|\Omg\setminus\Omg^{\ast}|=|\Omg^{\ast}\setminus\Omg|=\frac{1}{2}|\Omg\triangle\Omg^{\ast}|=\bt.
	\end{equation*}
	Then using $ \Omg\setminus\Omg^{\ast}=(\Omg\cup\Omg^{\ast})\setminus\Omg^{\ast} $,\; $ \Omg^{\ast}\setminus\Omg=\Omg^{\ast}\setminus(\Omg\cap\Omg^{\ast}) $, and $ h(f)\geq h(f^{\ast}) $, we obtain
	\begin{equation}
		\begin{split}
			\int_{\Omg\setminus\Omg^{\ast}}x_{1}dx\geq\int_{(\Omg\cup\Omg^{\ast})^{\ast}\setminus\Omg^{\ast}}x_{1}dx,\quad \int_{\Omg^{\ast}\setminus\Omg}x_{1}dx\leq\int_{\Omg^{\ast}\setminus(\Omg\cap\Omg^{\ast})^{\ast}}x_{1}dx.\label{ineq_setm}
		\end{split}
	\end{equation}
	Then taking $ a_{1}\geq b\geq a_{2}>0 $ that satisfy
	\begin{equation*}
		\lbrace x_{1}<a_{1}\rbrace
		=(\Omg\cup\Omg^{\ast})^{\ast},\quad \lbrace x_{1}<b\rbrace
		=\Omg^{\ast},\quad \lbrace x_{1}<a_{2}\rbrace
		=(\Omg\cap\Omg^{\ast})^{\ast},
	\end{equation*}
	we get
	\begin{equation*}
		a_{1}=b+\frac{\bt}{2\pi},\quad a_{2}=b-\frac{\bt}{2\pi}.
	\end{equation*}
	This together with the inequality \eqref{ineq_setm}, we obtain 
	\begin{equation*}
		\begin{split}
			h(1_{\Omg})-h(1_{\Omg^{\ast}})&=\int_{\Omg\setminus\Omg^{\ast}}x_{1}dx-\int_{\Omg^{\ast}\setminus\Omg}x_{1}dx\geq\int_{\lbrace b\leq x_{1}<a_{1}\rbrace
			}x_{1}dx-\int_{\lbrace a_{2}\leq x_{1}<b\rbrace
		}x_{1}dx\\
			&=2\pi\bigg(\int_{b}^{a_{1}}x_{1}dx_{1}-\int_{a_{2}}^{b}x_{1}dx_{1}\bigg)=\pi(a_{1}^{2}+a_{2}^{2}-2b^{2})=\frac{\bt^{2}}{2\pi} =\frac{1}{8\pi}\big\|1_{\Omg}-1_{\Omg^{\ast}}\big\|_{L^{1}}^{2}.
		\end{split}
	\end{equation*}
	Now we consider the case where $ f $ is a simple function of the form
	\begin{equation*}
		f(x)=\frac{1}{n}\sum_{i=1}^{n-1}1_{A_{i}}(x)
	\end{equation*}
	with some $n\ge2$, where $ \lbrace A_{i}\rbrace_{i=1}^{n-1}\subset S_{+} $ satisfies $ A_{i+1}\subset A_{i} $ for $ i=1,\cdots,n-2 $ and $ |A_{1}|<\ift $. Then its rearrangement $ f^{\ast} $ becomes
	\begin{equation*}
		f^{\ast}(x)=\frac{1}{n}\sum_{i=1}^{n-1}1_{A_{i}^{\ast}}(x).
	\end{equation*}
	Then due to the result above and the linearity of $ h $, we get
	\begin{equation*}
		\begin{split}
			h(f)-h(f^{\ast})&=\frac{1}{n}\sum_{i=1}^{n-1}\big[h(1_{A_{i}})-h(1_{A_{i}^{\ast}})\big]\geq\frac{1}{8\pi n}\sum_{i=1}^{n-1}\big\|1_{A_{i}}-1_{A_{i}^{\ast}}\big\|_{L^{1}}^{2}
		\end{split}
	\end{equation*}
	On the other hand, using the Cauchy-Schwarz inequality, we have
	\begin{equation*}
		\big\|f-f^{\ast}\big\|_{L^{1}}
		\leq\frac{1}{n}\sum_{i=1}^{n-1}\big\|1_{A_{i}}-1_{A_{i}^{\ast}}\big\|_{L^{1}}\leq\frac{\sqrt{n-1}}{n}\bigg(\sum_{i=1}^{n-1}\big\|1_{A_{i}}-1_{A_{i}^{\ast}}\big\|_{L^{1}}^{2}\bigg)^{\frac{1}{2}}.
	\end{equation*}
	This gives us
	\begin{equation}\label{est_simple}
		h(f)-h(f^{\ast})
		\geq\frac{n}{8\pi (n-1)}\big\|f-f^{\ast}\big\|_{L^{1}}^{2}.
	\end{equation}
	Finally, for general $ f $, we use the sequence of simple functions $ \lbrace g_{n}\rbrace_{n=1}^{\ift}\subset L^{\ift}(S_{+}) $ given as
	\begin{equation*}
		g_{n}(x)=\frac{1}{n}\sum_{i=1}^{n-1}1_{A_{i}^{(n)}}(x),\quad A_{i}^{(n)}=\bigg\lbrace f>\frac{i}{n}\bigg\rbrace.
	\end{equation*}
	Then $ g_{n} $ is dominated by $ f $ and it converges to $ f $ pointwise as $ n $ goes to infinity, and the same holds for $ (g_{n})^{\ast} $ and $ f^{\ast} $. Therefore, applying the estimate \eqref{est_simple} to $ g_{n} $ and using the dominated convergence theorem, the proof is complete. 
\end{proof}

\subsection{The cut-off operator and the rearrangement}

We need one more notion in order to prove Theorem \ref{thm_stabi}; the cut-off operator of a non-negative function. For any $ \alp>0 $, we define a cut-off function $ \eta_{\alp} : \bbR_{\geq0}\To\bbR_{\geq0} $ given by
\begin{equation*}
	\eta_{\alp}(s):=\begin{cases}
		s & \text{ if }s\leq \alp\\
		\alp  & \text{ if } s>\alp
	\end{cases},
\end{equation*}
and for any non-negative measurable function $ f $ on $ S_{+} $, we define the cut-off operator $ \Gmm_{\alp} $ as
\begin{equation*}
	\Gmm_{\alp}f:=\eta_{\alp}\circ f.
\end{equation*}
That is, $ \Gmm_{\alp} $ cuts off the part where $ f $ is greater than $ \alp $ and replaces it with $ \alp $, whereas the other part of $ f $ that is less than or equal to $ \alp $ stays the same. Note that if $ f $ is in $ L^{1}(S_{+}) $, where the rearrangement $ f^{\ast} $ of $ f $ is defined, then the cut-off operator and the rearrangement operator commute with one another:
\begin{equation}
	(\Gmm_{\alp}f)^{\ast}=\Gmm_{\alp}(f^{\ast}).\label{eq_commute}
\end{equation}
Indeed, it can be easily shown that $ \Gmm_{\alp}(f^{\ast}) $ satisfies the definition of $ (\Gmm_{\alp}f)^{\ast} $. For $ s\in[0,\alp] $, we have
\begin{equation*}
	\lbrace \Gmm_{\alp}(f^{\ast})>s\rbrace=\lbrace f^{\ast}>s\rbrace=\lbrace f>s\rbrace^{\ast}=\lbrace \Gmm_{\alp}f>s\rbrace^{\ast},
\end{equation*}
and for $ s\in(\alp,\ift) $, we have
\begin{equation*}
	\lbrace \Gmm_{\alp}(f^{\ast})>s\rbrace=\emptyset=\lbrace \Gmm_{\alp}f>s\rbrace^{\ast}.
\end{equation*}
This property is crucial in the proof of Theorem \ref{thm_stabi}. 

\subsection{Proof of Theorem \ref{thm_stabi}}

We finish this section by proving Theorem \ref{thm_stabi}. We shall use the notation $\Lmb_{f,\alp}:=\lbrace f>\alp\rbrace.$

\begin{proof}[Proof of Theorem \ref{thm_stabi}]
	We fix $ L,M>0 $ and let $ \zt $ be a function that satisfies the condition \eqref{eq_LM}. 
	Then note that for any non-negative function $ f\in L^{1}(S_{+}) $, the measure of the level set $ \Lmb_{f,M+1} $ can be estimated as
	\begin{equation}
		|\Lmb_{f,M+1}|=\int_{\lbrace f-M>1\rbrace}1dx\leq\int_{\lbrace f-M>1\rbrace}|f-M|dx\leq\int_{\lbrace f-M>1\rbrace}|f-\zt|dx\leq\big\|f-\zt\big\|_{L^{1}}.\label{est_lambf}
	\end{equation}
	We fix $ t\geq0 $ and for simplicity, we drop the parameter $ t $ from $ \omg(t) $ and $ M+1 $ from the level set notation and the cut-off operator;
	\begin{equation*}
		\omg=\omg(t),\quad {\Lmb_{\omg_{0}}=\Lmb_{\omg_{0},M+1}},\quad \Lmb_{\omg}=\Lmb_{\omg(t),M+1},\quad 
		\Gmm\omg=\Gmm_{M+1}[\omg(t)].
	\end{equation*}
	To begin with, we estimate $ h(|\omg-\zt|) $ using the decomposition
	\begin{equation*}
		\int_{\lbrace x_{1}\geq L\rbrace
		}x_{1}\omg dx=\big[ h(\omg)-h(\zeta) \big]-\int_{\lbrace x_{1}<L\rbrace
		}x_{1} (\omg-\zt)dx.
	\end{equation*}
	Then using the non-negativity of $ \omg $, we can use this decomposition to obtain
	\begin{equation*}
		\begin{split}
			h(|\omg-\zt|)&=\int_{\lbrace x_{1}<L\rbrace}x_{1}| \omg-\zeta| dx+\int_{\lbrace x_{1}\geq L\rbrace}x_{1}\underbrace{|\omg- \zeta|}_{=|\omg|=\omg} dx \leq L\int_{\lbrace x_{1}<L\rbrace}| \omg-\zeta| dx+\int_{\lbrace x_{1}\geq L\rbrace}x_{1}\omg dx\\
			&\leq L\int_{\lbrace x_{1}<L\rbrace}| \omg-\zeta| dx+ \big[h(\omg_{0})-h(\zeta)\big]+\int_{\lbrace x_{1}<L\rbrace}x_{1}  \big|\omg-\zeta \big|dx.
		\end{split}
	\end{equation*}
	In the above, the conservation $ h(\omg)=h(\omg_{0}) $ 
	was used. Then we have
	\begin{equation*}
		\begin{split}
			h(|\omg-\zt|)&\leq2L\int_{\lbrace x_{1}<L\rbrace}| \omg-\zeta| dx+h(|\omg_{0}-\zt|)\leq2L\big\|\omg-\zt\big\|_{L^{1}}+h(|\omg_{0}-\zt|).
		\end{split}
	\end{equation*}
	This shows us that it is only left to estimate $ \big\|\omg-\zt\big\|_{L^{1}} $ to obtain an estimate of $ \big\|\omg-\zt\big\|_{J_{1}} $. First, we split the integral of $ \big\|\omg-\zt\big\|_{L^{1}} $ to get
	\begin{equation*}
		\big\|\omg-\zt\big\|_{L^{1}}=\int_{\Lmb_{\omg}}| \omg-\zt| dx+\int_{\Lmb_{\omg}^{C}}| \Gmm\omg-\zt| dx.
	\end{equation*}
	In the right-hand side, we can estimate the first term:
	\begin{equation*}
		\begin{split}
			\int_{\Lmb_{\omg}}| \omg-\zt| dx&\leq\int_{\Lmb_{\omg}}\omg dx+\int_{\Lmb_{\omg}}\zt dx=\int_{\Lmb_{\omg_{0}}}\omg_{0} dx+\int_{\Lmb_{\omg}}\zt dx\leq\int_{\Lmb_{\omg_{0}}}|\omg_{0}-\zt| dx+\int_{\Lmb_{\omg_{0}}}\zt dx+M\cdot\underbrace{|\Lmb_{\omg}|}_{=|\Lmb_{\omg_{0}}|}\\
			&\leq\big\|\omg_{0}-\zt\big\|_{L^{1}}+M\cdot|\Lmb_{\omg_{0}}|+M\cdot|\Lmb_{\omg_{0}}|\leq(2M+1)\big\|\omg_{0}-\zt\big\|_{L^{1}}.
		\end{split}
	\end{equation*}
	The third inequality follows by using $ \zt\leq M $ and the conservation $ |\Lmb_{\omg}|=|\Lmb_{\omg_{0}}| $. The last inequality comes from the estimate \eqref{est_lambf} of $ |\Lmb_{\omg_{0}}| $.\\
	The second term is estimated by using the estimate \eqref{estimate_MP} from Lemma \ref{lem_MPestimate} and the nonexpansivity from Lemma \ref{lem_nonexp}, due to $ (\Gmm\omg)^{\ast}=(\Gmm\omg_{0})^{\ast} $ and $ \zt^{\ast}=\zt $, respectively. We also use the commutative property $ (\Gmm\omg_{0})^{\ast}=\Gmm[(\omg_{0})^{\ast}] $ from \eqref{eq_commute}:
	\begin{equation*}
		\begin{split}
			\int_{\Lmb_{\omg}^{C}}| \Gmm\omg-\zt| dx&\leq\big\|\Gmm\omg-\zt\big\|_{L^{1}}\leq\big\|\Gmm\omg-(\Gmm\omg)^{\ast}\big\|_{L^{1}}+\big\|(\Gmm\omg_{0})^{\ast}-\zt\big\|_{L^{1}}\\
			&\leq\sqrt{C(M+1)}\big[h(\Gmm\omg)-h\big((\Gmm\omg_{0})^{\ast}\big)\big]^{\frac{1}{2}}+\big\|\Gmm\omg_{0}-\zt\big\|_{L^{1}}\\
			&\leq\sqrt{C(M+1)}\big[h(\Gmm\omg)-h\big(\Gmm[(\omg_{0})^{\ast}]\big)\big]^{\frac{1}{2}}+\big\|\omg_{0}-\zt\big\|_{L^{1}}.
		\end{split}
	\end{equation*}
	Furthermore, the term $ h(\Gmm\omg)-h\big(\Gmm[(\omg_{0})^{\ast}]\big) $ from the above can be estimated by adding and subtracting suitable terms, using $ \Gmm\omg\leq\omg $, and using the conservation $ h(\omg)=h(\omg_{0}) $:
	\begin{equation*}
		\begin{split}
			h(\Gmm\omg
			)-h\big(\Gmm[(\omg_{0})^{\ast}]
			\big)=&\;\big[h(\Gmm\omg
			)-h(\omg)\big]
			+\big[h(\omg_{0})
			-h(\zt)\big]+\big[h(\zt)-h\big((\omg_{0})^{\ast}\big)\big]+\big[h\big((\omg_{0})^{\ast}\big)-h\big(\Gmm[(\omg_{0})^{\ast}]
			\big)\big]\\
			\leq&\;h(|\omg_{0}-\zt|)+h\big(\big|(\omg_{0})^{\ast}-\zt\big|\big)
			+\int_{\Lmb_{(\omg_{0})^{\ast},M+1}}x_{1}(\omg_{0})^{\ast}dx.
		\end{split}
	\end{equation*}
	The second term in the right-hand side of the above inequality 
	can be estimated in the same way as we did for the term $ h(|\omg-\zt|) $, which gives us
	\begin{equation*}
		h\big(\big|(\omg_{0})^{\ast}-\zt\big|\big)\leq2L\big\|(\omg_{0})^{\ast}-\zt\big\|_{L^{1}}+h(|\omg_{0}-\zt|)\leq2L\big\|\omg_{0}-\zt\big\|_{L^{1}}+h(|\omg_{0}-\zt|).
	\end{equation*}
	For the third term, 
	note that $ \Lmb_{(\omg_{0})^{\ast},M+1}=\lbrace x_{1}<a_{0}\rbrace
	 $ for some $ a_{0}\geq0 $, and from the level set measure conservation of rearrangement and the estimate \eqref{est_lambf} of $ |\Lmb_{\omg_{0}}| $, we have
	\begin{equation*}
		a_{0}=\frac{|\Lmb_{(\omg_{0})^{\ast},M+1}|}{2\pi}=\frac{|\Lmb_{\omg_{0}}|}{2\pi}\leq\frac{1}{2\pi}\big\|\omg_{0}-\zt\big\|_{L^{1}}.
	\end{equation*}
	Using this and the $ L^{1}- $norm conservation of rearrangement, we get
	\begin{equation*}
		\begin{split}
			\int_{\Lmb_{(\omg_{0})^{\ast},M+1}}x_{1}(\omg_{0})^{\ast}dx&\leq a_{0}\int_{\lbrace x_{1}<a_{0}\rbrace
			}(\omg_{0})^{\ast} dx\leq\frac{1}{2\pi}\big\|\omg_{0}-\zt\big\|_{L^{1}}\big\|(\omg_{0})^{\ast}\big\|_{L^{1}}=\frac{1}{2\pi}\big\|\omg_{0}-\zt\big\|_{L^{1}}\big\|\omg_{0}\big\|_{L^{1}}\\
			&\leq\frac{1}{2\pi}\big\|\omg_{0}-\zt\big\|_{L^{1}}^{2}+\frac{1}{2\pi}\big\|\zt\big\|_{L^{1}}\big\|\omg_{0}-\zt\big\|_{L^{1}}\leq\frac{1}{2\pi}\big\|\omg_{0}-\zt\big\|_{L^{1}}^{2}+LM\big\|\omg_{0}-\zt\big\|_{L^{1}}.
		\end{split}
	\end{equation*}
	Thus, gathering all the above estimates, we have
	\begin{equation*}
		\begin{split}
			\big\|\omg-\zt\big\|_{L^{1}}\leq&\;C_{L,M}\Big[\big\|\omg_{0}-\zt\big\|_{L^{1}}^{\frac{1}{2}}+\big\|\omg_{0}-\zt\big\|_{L^{1}}+h(|\omg_{0}-\zt|)^{\frac{1}{2}}\Big],
		\end{split}
	\end{equation*}
	where $ C_{L,M}>0 $ is a constant that depends only on $ L,M $. Finally, we obtain
	\begin{equation*}
		\begin{split}
			\big\|\omg-\zt\big\|_{J_{1}}&\leq(2L+1)\big\|\omg-\zt\big\|_{L^{1}}+h(|\omg_{0}-\zt|) \leq C_{1}\Big[\big\|\omg_{0}-\zt\big\|_{J_{1}}^{\frac{1}{2}}+\big\|\omg_{0}-\zt\big\|_{J_{1}}\Big],
		\end{split}
	\end{equation*}
	where $ C_{1}=C_{1}(L,M)>0 $ is a constant.
\end{proof}

\section{Perimeter growth}

\subsection{Velocity and the total mass of the stationary patch $ 1_{\br{\Omg}} $}

We consider the patch-type vorticity $\br{\omg}=1_{\br{\Omg}}$ on $ S_{+} $. It defines a stationary solution because 
$\br{\omg}=\br{\omg}(x_1)$
is $ x_{2}- $independent. The velocity field $ \br{u} $ on $ S_{+} $ that corresponds to $ \br\omg $ by the Biot--Savart law \eqref{BiotSavart} can be calculated, using the stream function $ \Psi =\Psi(x_1)$.
We denote $ \Psi' $ as the first derivative of $ \Psi $ with respect to the $ x_{1}- $variable. Then using the conditions from the problem \eqref{eq_elliptic}, we obtain
\begin{equation*}
	\begin{split}
		\br{u}_{2}(x)&=\Psi'(x_{1})=-\Psi'(\xi)\bigg|_{\xi=x_{1}}^{\xi=\ift}=-\int_{x_{1}}^{\ift}\underbrace{\Psi''(\xi)}_{=\lap\Psi(\xi)}d\xi= \int_{x_{1}}^{\ift}1_{(0,1)}(\xi)d\xi=\begin{cases}
			1-x_{1} & \text{if }0\leq x_{1}<1,\\
			0 & \text{if }x_{1}\geq1
		\end{cases}.
	\end{split}
\end{equation*}
In addition, we can calculate the total mass $ m(1_{\br{\Omg}}) $ 
of $ 1_{\br{\Omg}} $:
\begin{equation*}
	\begin{split}
		m(1_{\br{\Omg}})&=\int_{S_{+}}1_{\br{\Omg}}(x)dx=\int_{\br{\Omg}}1\ dx
		=\int_{0}^{1}\int_{-\pi}^{\pi}1\ dx_{2}dx_{1}=2\pi.
	\end{split}
\end{equation*}

\subsection{Framework of understanding the patch $ 1_{\Omg_{t}} $ on $ S_{+} $
}

To begin with, we let $ \Pi : \bbR \To \bbT $ be the quotient map $ \Pi(x):=x-2n\pi $ for some $ n\in\bbZ $ that satisfies $ x\in[(2n-1)\pi,(2n+1)\pi) $. Additionally, we define a map $ Q : \bbR_{+}^{2}\To S_{+} $ by
\begin{equation*}
	Q(x_{1},x_{2}):=(x_{1},\Pi(x_{2}))=(x_{1},x_{2}-2n\pi),
\end{equation*}
for the same $ n\in\bbZ $ above.

We let $ \omg_{0}=1_{\Omg_{0}} $ with $ \Omg_{0}\subset S_{+} $. Then we obtain the unique solution $ \omg(t)=1_{\Omg_{t}} $ of \eqref{Eulereqs} on $ S_{+} $ with the initial data $ \omg_{0}=1_{\Omg_{0}} $, and get the velocity field $ u(t) $ on $ S_{+} $ by the cylindrical Biot--Savart law \eqref{BiotSavart}. We introduce one way of understanding the patch $ 1_{\Omg_{t}} $.
\noindent Let us consider the periodic extension $ u_{ext}(t) $ on $ \bbR_{+}^{2} $ of $ u(t) $, defined as
\begin{equation}\label{eq_uext}
	u_{ext}(t,x):=u\big(t,Q(x)\big),\quad t\geq0,\quad x\in\bbR_{+}^{2}.
\end{equation}
Then if we let $ t\geq0 $ and $ x\in\bbR_{+}^{2} $, then $ u_{ext}(t) $ 
induces the flow map $ \Phi(t) $ on $ \bbR_{+}^{2} $ that satisfies the ODE \eqref{eq_flowmapext}. 
Note that $ \Phi(t) $ is well-defined because $ u_{ext}(t) $ is uniformly bounded in time and has the log-Lipschitz estimate:
\begin{equation*}
	\big\|u_{ext}(t)\big\|_{L^{\ift}(\bbR_{+}^{2})}\leq C\big\|\omg_{0}\big\|_{(L^{1}\cap L^{\ift})(S_{+})},\quad t\geq0,\label{eq_uunifbd}
\end{equation*}
\begin{equation*}
	|u_{ext}(t,x)-u_{ext}(t,z)|\leq C\big\|\omg_{0}\big\|_{(L^{1}\cap L^{\ift})(S_{+})}|x-z|(1-\ln|x-z|),\quad t\geq0,\quad|x-z|\leq1.\label{eq_logLip}
\end{equation*}
See \cite[Sec. 2.3]{MaPMT}, especially \cite[Lemma 3.2]{MaPMT}, or \cite[Sec. 8.2.3]{MaBVo}. 
Then we consider $ \Phi(t,\Omg_{0})\subset\bbR_{+}^{2} $, and 
$ \Omg_{t}\subset S_{+}
 $ is the image of $ \Phi(t,\Omg_{0}) $ through the map $ Q $:
\begin{equation*}
	\Omg_{t}
	=Q\big(\Phi(t,\Omg_{0})\big)=Q\big(\Phi_{t}(\Omg_{0})\big).
\end{equation*}
That is, $ \Omg_{t} $ is the union of intersection between $ \Phi_{t}(\Omg_{0}) $ and $ \lbrace (x_{1},x_{2})\in\bbR_{+}^{2} : (2k-1)\pi\leq x_{2}<(2k+1)\pi\rbrace $ that are translated by the distance $ 2k\pi $ in $ x_{2} $ from $ k=-N $ to $ k=N $:
\begin{equation}\label{eq_Omgt}
	\begin{split}
		\Omg_{t}&=\bigcup_{k=-N}^{N}\Omg_{t,k},\quad \Omg_{t,k}:=E_{t,k}-\begin{pmatrix}
			0\\
			2k\pi
		\end{pmatrix},\\
		E_{t,k}&:=\Phi_{t}(\Omg_{0})\cap\lbrace (x_{1},x_{2})\in\bbR_{+}^{2} : (2k-1)\pi\leq x_{2}<(2k+1)\pi\rbrace,
	\end{split}
\end{equation}
where $ N=N(t)\in\bbN $ satisfies $ \Phi_{t}(\Omg_{0})\subset\bbR_{>0}\times[-(2N+1)\pi,(2N+1)\pi) $.

\subsection{Growth rate of the vertical center of mass of $ 1_{\br{\Phi}_{t}(\br{\Omg})} $
}

For a function $ f $ defined on $ \bbR_{+}^{2} $ in which both $ f $ and $ x_{2}f $ are in $ L^{1}(\bbR_{+}^{2}) $, we define 
the vertical center of mass $ k(f) $ as
\begin{equation*}
	\begin{split}
		k(f):=\bigg(\int_{\bbR_{+}^{2}}f(x)dx\bigg)^{-1}\cdot
		\int_{\bbR_{+}^{2}}x_{2}f(x)dx.
	\end{split}
\end{equation*}
As in \eqref{eq_uext}, we define $ \br{u}_{ext} $, the periodic extension of $ \br{u} $, which is the steady velocity from $ \br{\omg}=1_{\br{\Omg}} $, by
\begin{equation*}
	\br{u}_{ext}(x):=\br{u}\big(Q(x)\big),\quad x\in\bbR_{+}^{2}.
\end{equation*}
We consider $ \br{\Phi}(t) $, the flow map from $ \br{u}_{ext} $, and the image of $ \br{\Omg} $ through $ \br{\Phi}(t) $:
$$ {\br{\Phi}_{t}(\br{\Omg})=\br{\Phi}(t,\br{\Omg})=\lbrace (x_{1},x_{2})\in\bbR_{+}^{2} : 0<x_{1}<1,\; (1-x_{1})t-\pi\leq x_{2}<(1-x_{1})t+\pi\rbrace.} $$
Then we can calculate $ k(1_{\br{\Omg}}) $;
\begin{equation*}
	\begin{split}
		k(1_{\br{\Omg}})&
		=\bigg(\int_{\br{\Omg}}1\;dx\bigg)^{-1}\cdot
		\int_{\br{\Omg}}x_{2}dx =\frac{1}{2\pi}\int_{0}^{1}\int_{-\pi}^{\pi}x_{2}dx_{2}dx_{1}=0,
	\end{split}
\end{equation*}
and $ \frac{d}{dt}k(1_{\br{\Phi}_{t}(\br{\Omg})}) $, the growth rate of $ k(1_{\br{\Phi}_{t}(\br{\Omg})}) $:
\begin{equation}
	\begin{split}
		\frac{d}{dt}k(1_{\br{\Phi}_{t}(\br{\Omg})})&
		=
		\frac{d}{dt}\bigg[\bigg(\int_{\br{\Phi}_{t}(\br{\Omg})}1\ dx\bigg)^{-1}\cdot\int_{\br{\Phi}_{t}(\br{\Omg})}x_{2}dx\bigg]=\bigg(\int_{\br{\Omg}}1\ dx\bigg)^{-1}\cdot\frac{d}{dt}\int_{\br{\Phi}_{t}(\br{\Omg})}x_{2}dx\\
		&=\frac{1}{2\pi}\frac{d}{dt}\int_{\br{\Omg}}\br{\Phi}_{2}(t,x)dx=\frac{1}{2\pi}\int_{\br{\Omg}}\rd_{t}\br{\Phi}_{2}(t,x)dx=\frac{1}{2\pi}\int_{\br{\Omg}}\br{u}_{ext,2}(x)dx\\
		&
		=\frac{1}{2\pi}\int_{\br{\Omg}}\br{u}_{2}(x)dx=\frac{1}{2\pi}\int_{-\pi}^{\pi}\int_{0}^{1}(1-x_{1})dx_{1}dx_{2}=\frac{1}{2}.
		\label{eq_derivofRt'1}
	\end{split}
\end{equation}

\subsection{Growth rate of the vertical center of mass of $ 1_{\Phi_{t}(\Omg_{0})} $}

In the following lemma, we show that the difference between the vertical velocities $ u_{2}(t) $ of $ 1_{\Omg_{t}} $ and $ \br{u}_{2} $ of $ 1_{\br{\Omg}} $ is controlled by the $ L^{1}- $difference between the corresponding patches.

\begin{lem}\label{lem_uift}
	There exists a constant $ C_{3}>0 $ such that if $ \Omg_{0}\subset S_{+} $ is a bounded open set, then for any $ t\geq0 $ and $ x\in S_{+} $, the velocity fields $ u(t) $ and $ \br{u} $ that are determined by vortex patches $ \omg(t)=1_{\Omg_{t}} $ and $ \br{\omg}=1_{\br{\Omg}} $, respectively, by the cylindrical Biot-Savart law \eqref{BiotSavart} satisfy
	\begin{equation}\label{eq_uift}
		|u_{2}(t,x)-\br{u}_{2}(x)|\leq C_{3}\big[|\Omg_{t}\triangle \br{\Omg}|^{\frac{1}{2}}+|\Omg_{t}\triangle \br{\Omg}|\big].
	\end{equation}
\end{lem}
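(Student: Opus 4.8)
The plan is to write both velocities through the cylindrical Biot--Savart law \eqref{BiotSavart} and reduce the difference to the integral of the kernel component $ K_{2} $ over the symmetric difference $ A:=\Omg_{t}\triangle\br{\Omg} $. Since $ \omg(t)=1_{\Omg_{t}} $ and $ \br{\omg}=1_{\br{\Omg}} $, formula \eqref{BiotSavart} gives
\begin{equation*}
	u_{2}(t,x)-\br{u}_{2}(x)=\int_{S_{+}}K_{2}(x,y)\big(1_{\Omg_{t}}(y)-1_{\br{\Omg}}(y)\big)dy,
\end{equation*}
so that $ |u_{2}(t,x)-\br{u}_{2}(x)|\leq\int_{A}|K_{2}(x,y)|dy $. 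Here $ A $ has finite measure: conservation of total mass gives $ |\Omg_{t}|=|\Omg_{0}|<\ift $, and $ m(1_{\br{\Omg}})=2\pi $, so $ |A|\leq|\Omg_{0}|+2\pi $.

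First I would promote the pointwise bound \eqref{eq_kernelsest} to all $ y\in S_{+} $. Inspecting \eqref{eq_kernelK}, the component $ K_{2}(x,y) $ depends on $ x_{2},y_{2} $ only through $ \cos(x_{2}-y_{2}) $, hence is $ 2\pi $--periodic in $ x_{2}-y_{2} $; replacing $ y_{2} $ by its representative lying within $ \pi $ of $ x_{2} $ leaves $ K_{2} $ unchanged while turning $ |x-y| $ into the half--cylinder distance $ d(x,y):=\big((x_{1}-y_{1})^{2}+\min_{k\in\bbZ}|x_{2}-y_{2}-2\pi k|^{2}\big)^{1/2} $. Thus \eqref{eq_kernelsest} yields $ |K_{2}(x,y)|\leq C_{0}/d(x,y)+1 $ for every $ x,y\in S_{+} $. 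The bounded part of this estimate contributes $ \int_{A}1\,dy=|A| $, which already matches the linear term in \eqref{eq_uift}, so it remains to establish the potential estimate
\begin{equation*}
	\int_{A}\frac{dy}{d(x,y)}\leq C\big(|A|^{\frac12}+|A|\big).
\end{equation*}

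To prove this I would split $ A $ at the scale $ d(x,y)=1 $. On the far piece $ A\cap\lbrace d>1\rbrace $ one has $ 1/d<1 $, so its contribution is at most $ |A| $. On the near piece $ A\cap\lbrace d\leq1\rbrace $, the inequality $ 1<\pi $ guarantees that no wrapping in $ x_{2} $ occurs, so there $ d $ agrees with the genuine Euclidean distance and the near piece may be regarded as a planar set of measure $ |A\cap\lbrace d\leq1\rbrace|\leq|A| $. The Hardy--Littlewood (bathtub) principle, applied to the radially decreasing weight $ y\mapsto 1/d(x,y) $, then bounds $ \int_{A\cap\lbrace d\leq1\rbrace}d^{-1}dy $ by the same integral over the centered Euclidean disc $ B_{r} $ of equal measure, for which the polar--coordinate computation $ \int_{B_{r}}|x-y|^{-1}dy=2\pi r $ with $ \pi r^{2}=|A\cap\lbrace d\leq 1\rbrace| $ gives $ 2\sqrt{\pi}\,|A|^{1/2} $. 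Collecting the two pieces together with the bounded part proves \eqref{eq_uift} with a universal constant $ C_{3} $.

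The only genuinely delicate point is transferring the planar potential estimate to the half--cylinder: one must verify that at scale $ d\leq1 $ the metric balls are honest Euclidean discs (ensured by $ 1<\pi $, so no wrapping) and that passing to the full plane in the rearrangement step only enlarges the integral, so that $ \int_{B_{r}}|x-y|^{-1}dy=2\pi r $ remains a valid upper bound despite the presence of the boundary $ \lbrace x_{1}>0\rbrace $. Everything else is the routine splitting of $ K_{2} $ into its singular and bounded parts.
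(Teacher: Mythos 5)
Your argument is correct and follows essentially the same route as the paper: reduce to a single period in $y_{2}$ so that the kernel bound \eqref{eq_kernelsest} applies, split $K_{2}$ into its $C_{0}/|x-y|$ and bounded parts, and control the singular part by an $L^{1}$--$L^{\infty}$ potential estimate yielding $|\Omg_{t}\triangle\br{\Omg}|^{1/2}$. The only difference is that the paper invokes \cite[Lemma 2.1]{ISG} for that potential estimate, whereas you reprove it inline via the near/far splitting and the bathtub principle --- a self-contained but equivalent step.
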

To prove this lemma, we use an elementary lemma {(e.g. see Iftimie--Sideris--Gamblin \cite{ISG})} that is used in bounding the velocity term by the $ L^{1}- $norm and the $ L^{\ift}- $norm of the vorticity term.
\begin{lem}[{{\cite[Lemma 2.1]{ISG}}}]\label{lem_ISG}
	Let $ A\subset \bbR^{2} $ and $ f\in (L^{1}\cap L^{\ift})(A) $ be a non-negative function. Then there exists an absolute constant $ C'>0 $ such that for any $ x\in\bbR^{2} $, we have
	\begin{equation*}\label{eq_ISG}
		\int_{A}\frac{f(y)}{|x-y|}dy\leq C'\big\|f\big\|_{L^{1}(A)}^{\frac{1}{2}}\big\|f\big\|_{L^{\ift}(A)}^{\frac{1}{2}}.
	\end{equation*}
\end{lem}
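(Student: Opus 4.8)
The plan is to prove this by the classical ``splitting at a cutoff radius'' argument, which balances the local singularity of the kernel $\abs{x-y}^{-1}$ against its integrable decay at infinity. First I would extend $f$ by zero to all of $\bbR^{2}$; since $f\geq0$ this changes neither $\nrm{f}_{L^{1}}$ nor $\nrm{f}_{L^{\ift}}$ and reduces the claim to the case $A=\bbR^{2}$. Fix $x\in\bbR^{2}$ and a parameter $R>0$ to be chosen at the end, and decompose
\[
\int_{\bbR^{2}}\frac{f(y)}{\abs{x-y}}\,dy=\int_{\abs{x-y}<R}\frac{f(y)}{\abs{x-y}}\,dy+\int_{\abs{x-y}\geq R}\frac{f(y)}{\abs{x-y}}\,dy=:\mathrm{I}+\mathrm{II}.
\]

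For the near piece $\mathrm{I}$, I would bound $f$ by its $L^{\ift}$ norm and exploit that $\abs{z}^{-1}$ is locally integrable in two dimensions: passing to polar coordinates centered at $x$,
\[
\mathrm{I}\leq\nrm{f}_{L^{\ift}}\int_{\abs{z}<R}\frac{dz}{\abs{z}}=\nrm{f}_{L^{\ift}}\int_{0}^{R}\frac{1}{r}\,2\pi r\,dr=2\pi R\,\nrm{f}_{L^{\ift}}.
\]
For the far piece $\mathrm{II}$, I would instead estimate the kernel pointwise by $R^{-1}$ and integrate $f$ against the constant:
\[
\mathrm{II}\leq\frac{1}{R}\int_{\abs{x-y}\geq R}f(y)\,dy\leq\frac{1}{R}\,\nrm{f}_{L^{1}}.
\]

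It then remains only to optimize the combined bound $\mathrm{I}+\mathrm{II}\leq 2\pi R\,\nrm{f}_{L^{\ift}}+R^{-1}\nrm{f}_{L^{1}}$ over $R>0$. If either norm vanishes the inequality is trivial, so one may assume both are positive; choosing $R=\big(\nrm{f}_{L^{1}}/(2\pi\nrm{f}_{L^{\ift}})\big)^{1/2}$ equalizes the two terms and yields
\[
\int_{\bbR^{2}}\frac{f(y)}{\abs{x-y}}\,dy\leq 2\sqrt{2\pi}\;\nrm{f}_{L^{1}}^{1/2}\nrm{f}_{L^{\ift}}^{1/2},
\]
which is the asserted inequality with the absolute constant $C'=2\sqrt{2\pi}$, manifestly independent of $x$. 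There is no genuine obstacle here: the only point demanding any attention is the integrability of $\abs{z}^{-1}$ near the origin in $\bbR^{2}$, which is precisely what keeps $\mathrm{I}$ finite, and the exponent $\tfrac12$ appearing on each norm is dictated by dimensional balancing (hence sharp). I note that the identical split works in any dimension $d\geq2$ with the kernel $\abs{z}^{1-d}$ replaced appropriately, but only the two-dimensional case is needed below.
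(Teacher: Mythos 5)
Your proof is correct: the near/far splitting with the optimal choice $R=\big(\nrm{f}_{L^{1}}/(2\pi\nrm{f}_{L^{\ift}})\big)^{1/2}$ is exactly the standard argument, and it is essentially the proof given in the cited reference \cite[Lemma 2.1]{ISG}; the paper itself does not reprove the lemma but uses it as a black box. Your constant $C'=2\sqrt{2\pi}$ and the handling of the degenerate cases are both fine, so there is nothing to add.
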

\begin{proof}[Proof of Lemma \ref{lem_uift}]
	We fix $ x=(x_{1},x_{2})\in S_{+} $. First, we define the periodic extensions $ \Omg_{t,ext}, \br{\Omg}_{ext}\subset\bbR_{+}^{2} $ of sets $ \Omg_{t}, \br{\Omg}\subset S_{+} $ as
	\begin{equation*}
		\begin{split}
			\Omg_{t,ext}=\Omg_{t},\quad& \br{\Omg}_{ext}=\br{\Omg}\quad\text{in}\quad S_{+},\\
			\Omg_{t,ext}+\begin{pmatrix}
				0\\2k\pi
			\end{pmatrix}=\Omg_{t,ext},\quad& \br{\Omg}_{ext}+\begin{pmatrix}
			0\\2k\pi
		\end{pmatrix}=\br{\Omg}_{ext}\quad\text{for any}\quad k\in\bbZ.
		\end{split}
	\end{equation*}
	Also, we define a subset $ A\subset\bbR_{+}^{2} $ as $ A:=\bbR_{>0}\times[-\pi+x_{2},\pi+x_{2}) $. Then using that $ K_{2}(x,\cdot) $, $ 1_{\Omg_{t,ext}} $, and $ 1_{\br{\Omg}_{ext}} $ are $ 2\pi- $periodic in $ y_{2} $, we have
	\begin{equation*}
		\begin{split}
			|u_{2}(t,x)-\br{u}_{2}(x)|&\leq\int_{S_{+}}|K_{2}(x,y)|\cdot|1_{\Omg_{t}}(y)-1_{\br{\Omg}}(y)|dy=\int_{A}|K_{2}(x,y)|\cdot|1_{\Omg_{t,ext}}(y)-1_{\br{\Omg}_{ext}}(y)|dy.
		\end{split}
	\end{equation*}
	Now we use the estimate \eqref{eq_kernelsest} of $ K_{2} $ and the above lemma to get
	\begin{equation*}
		\begin{split}
			|u_{2}(t,x)-\br{u}_{2}(x)|&\leq\int_{A}|K_{2}(x,y)|\cdot|1_{\Omg_{t,ext}}(y)-1_{\br{\Omg}_{ext}}(y)|dy\leq\int_{A}\bigg(\frac{C_{0}}{|x-y|}+1\bigg)\cdot|1_{\Omg_{t,ext}}(y)-1_{\br{\Omg}_{ext}}(y)|dy\\
			&\leq C_{0}'\big\|1_{\Omg_{t,ext}}-1_{\br{\Omg}_{ext}}\big\|_{L^{1}(A)}^{\frac{1}{2}}\big\|1_{\Omg_{t,ext}}-1_{\br{\Omg}_{ext}}\big\|_{L^{\ift}(A)}^{\frac{1}{2}}+\big\|1_{\Omg_{t,ext}}-1_{\br{\Omg}_{ext}}\big\|_{L^{1}(A)}\\
			&=C_{0}'\big\|1_{\Omg_{t}}-1_{\br{\Omg}}\big\|_{L^{1}(S_{+})}^{\frac{1}{2}}+\big\|1_{\Omg_{t}}-1_{\br{\Omg}}\big\|_{L^{1}(S_{+})},
		\end{split}
	\end{equation*}
	for some $ C_{0}'>0 $. Finally, taking $ C_{3}:=\max\lbrace1, C_{0}'\rbrace $, we have
	\begin{equation*}
		|u_{2}(t,x)-\br{u}_{2}(x)|\leq C_{0}'\big\|1_{\Omg_{t}}-1_{\br{\Omg}}\big\|_{L^{1}(S_{+})}^{\frac{1}{2}}+\big\|1_{\Omg_{t}}-1_{\br{\Omg}}\big\|_{L^{1}(S_{+})}\leq C_{3}\big[|\Omg_{t}\triangle\br{\Omg}|^{˝\frac{1}{2}}+|\Omg_{t}\triangle\br{\Omg}|\big].
	\end{equation*}
\end{proof}
As a corollary of this lemma, 
applying the $ J_{1}- $stability, obtained in Theorem \ref{thm_stabi}, on \eqref{eq_uift} allows us to control the difference between $ u_{2}(t) $ and $ \br{u}_{2} $ with the difference between $ \Omg_{0} $ and $ \br{\Omg} $ in measure.
\begin{cor}\label{cor_uift}
	There exists a constant $ C_{4}>0 $ such that if $ \Omg_{0}\subset \lbrace x_{1}<3\rbrace $ is a bounded open set that satisfies $ |\Omg_{0}\triangle \br{\Omg}|\leq1 $, then for any $ t\geq0 $ and $ x\in S_{+} $, the velocity fields $ u(t) $ and $ \br{u} $ from the previous lemma satisfy
	\begin{equation}\label{eq_u1u2bd}
		|u_{2}(t,x)-\br{u}_{2}(x)|\leq C_{4}|\Omg_{0}\triangle \br{\Omg}|^{\frac{1}{4}}.
	\end{equation}
\end{cor}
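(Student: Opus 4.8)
The plan is to chain the time-uniform stability estimate of Theorem~\ref{thm_stabi} with the instantaneous velocity bound of Lemma~\ref{lem_uift}. Note that Lemma~\ref{lem_uift} controls $|u_2(t,x)-\br u_2(x)|$ by the symmetric difference $|\Omg_t\triangle\br\Omg|$ at the \emph{current} time $t$, whereas the hypothesis of the corollary only constrains the \emph{initial} set $\Omg_0$. So the one missing ingredient is a bound on $|\Omg_t\triangle\br\Omg|$ in terms of $|\Omg_0\triangle\br\Omg|$, uniform in $t$; this is exactly what stability supplies once we pass between the $L^1$- and $J_1$-norms.

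First I would apply Theorem~\ref{thm_stabi} with the stationary profile $\zt=1_{\br\Omg}=1_{(0,1)}(x_1)$, which is $x_2$-independent, non-negative, non-increasing, bounded with $\|\zt\|_{L^\infty}=1$, and supported in $\{x_1<2\}$. Taking $L=2$, $M=1$, and $\omg_0=1_{\Omg_0}$, this yields a fixed constant $C_1=C_1(2,1)$ with
\[
\sup_{t\ge0}\big\|1_{\Omg_t}-1_{\br\Omg}\big\|_{J_1(S_+)}\le C_1\Big[\big\|1_{\Omg_0}-1_{\br\Omg}\big\|_{J_1(S_+)}^{1/2}+\big\|1_{\Omg_0}-1_{\br\Omg}\big\|_{J_1(S_+)}\Big].
\]
Next I would exploit the two-sided comparison coming from the weight $1+x_1$. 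Since $1+x_1\ge1$, one always has $|\Omg_t\triangle\br\Omg|=\|1_{\Omg_t}-1_{\br\Omg}\|_{L^1}\le\|1_{\Omg_t}-1_{\br\Omg}\|_{J_1}$. In the other direction, because $\br\Omg=\{x_1<1\}$ and $\Omg_0\subset\{x_1<3\}$ force $\Omg_0\triangle\br\Omg\subset\{x_1<3\}$, where $1+x_1<4$, we obtain $\|1_{\Omg_0}-1_{\br\Omg}\|_{J_1}\le4\,|\Omg_0\triangle\br\Omg|$.

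Writing $\delta:=|\Omg_0\triangle\br\Omg|\le1$ and inserting these two comparisons into the displayed stability estimate, while using $\delta\le\delta^{1/2}$ for $\delta\le1$ to absorb the linear term, I expect a clean bound of the form $|\Omg_t\triangle\br\Omg|\le 6C_1\,\delta^{1/2}$, uniform in $t$. Finally I would substitute this into Lemma~\ref{lem_uift}: the term $|\Omg_t\triangle\br\Omg|^{1/2}$ becomes $\lesssim\delta^{1/4}$, and the linear term satisfies $|\Omg_t\triangle\br\Omg|\lesssim\delta^{1/2}\le\delta^{1/4}$ (again since $\delta\le1$), so both collapse to the single power $\delta^{1/4}$ with a constant $C_4$ depending only on $C_1$ and $C_3$.

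I do not anticipate a genuine obstacle here: the corollary is a direct composition of two already-established estimates, and the only care required is bookkeeping of exponents — specifically exploiting the smallness constraint $\delta\le1$ repeatedly to absorb every higher power of $\delta$ into $\delta^{1/4}$ — together with verifying that $\zt=1_{\br\Omg}$ meets the hypotheses of Theorem~\ref{thm_stabi} with $(L,M)=(2,1)$.
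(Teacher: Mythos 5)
Your proposal is correct and follows essentially the same route as the paper: apply Theorem \ref{thm_stabi} with $\zt=1_{\br{\Omg}}$ to get the time-uniform bound $|\Omg_t\triangle\br{\Omg}|\lesssim C_1|\Omg_0\triangle\br{\Omg}|^{1/2}$ (using $\Omg_0\subset\{x_1<3\}$ so the weight $1+x_1$ is comparable to a constant on $\Omg_0\triangle\br{\Omg}$, and $\dlt\le1$ to absorb the linear term), then feed this into Lemma \ref{lem_uift}. The only differences from the paper's proof are immaterial constant choices ($L=2$ versus the paper's $L=M=1$, and the resulting numerical factors).
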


\begin{proof}
	Using \eqref{eq_J1estimate} from Theorem \ref{thm_stabi} with $ L=M=1 $, $ \zt=1_{\br{\Omg}} $, and $ \omg(t)=1_{\Omg_{t}} $, we can take an absolute constant $ C_{1}>0 $ that satisfies
	\begin{equation*}
		\begin{split}
			|\Omg_{t}\triangle \br{\Omg}|&=\int_{\Omg_{t}\triangle \br{\Omg}}1dx\leq\int_{\Omg_{t}\triangle \br{\Omg}}(1+x_{1})dx\leq C_{1}\bigg[\bigg(\int_{\Omg_{0}\triangle \br{\Omg}}(1+x_{1})dx\bigg)^{\frac{1}{2}}+\int_{\Omg_{0}\triangle \br{\Omg}}(1+x_{1})dx\bigg].
		\end{split}
	\end{equation*}
	Then using the conditions $ \Omg_{0}\subset \lbrace x_{1}<3\rbrace $ and $ |\Omg_{0}\triangle \br{\Omg}|\leq1 $, we have
	\begin{equation}
		\begin{split}
			|\Omg_{t}\triangle \br{\Omg}|&\leq C_{1}\bigg[\bigg(\int_{\Omg_{0}\triangle \br{\Omg}}(1+x_{1})dx\bigg)^{\frac{1}{2}}+\int_{\Omg_{0}\triangle \br{\Omg}}(1+x_{1})dx\bigg]\\
			&\leq 4C_{1}\big[|\Omg_{0}\triangle \br{\Omg}|^{\frac{1}{2}}+|\Omg_{0}\triangle \br{\Omg}|\big]\leq 8C_{1}|\Omg_{0}\triangle \br{\Omg}|^{\frac{1}{2}}.\label{eq_stabi6C1}
		\end{split}
	\end{equation}
	Finally for any $ t\geq0 $ and $ x\in S_{+} $, combining this with \eqref{eq_uift} from the previous lemma, we obtain
	\begin{equation*}
		\begin{split}
			|u_{2}(t,x)-\br{u}_{2}(x)|&\leq C_{3}\big[|\Omg_{t}\triangle \br{\Omg}|^{\frac{1}{2}}+|\Omg_{t}\triangle \br{\Omg}|\big] \leq C_{3}\big[\sqrt{8C_{1}}|\Omg_{0}\triangle \br{\Omg}|^{\frac{1}{4}}+8C_{1}|\Omg_{0}\triangle \br{\Omg}|^{\frac{1}{2}}\big]\leq C_{4}|\Omg_{0}\triangle \br{\Omg}|^{\frac{1}{4}}
		\end{split}
	\end{equation*}
	with $ C_{4}:=C_{3}(\sqrt{8C_{1}}+8C_{1}) $.
\end{proof}

Using this corollary, we prove that the growth rate of the vertical center of mass $ \frac{d}{dt}k(1_{\Phi_{t}(\Omg_{0})}) $ of $ 1_{\Phi_{t}(\Omg_{0})} $ is close to $ \frac{d}{dt}k(1_{\br{\Phi}_{t}(\br{\Omg})}) $ in \eqref{eq_derivofRt'1} if the sets $ \Omg_{0} $ and $ \br{\Omg} $ in $ S_{+} $ are close with each other in measure.
\begin{lem}\label{lem_comgrowth}
	There exists a constant $ C_{5}>0 $ such that if $ \Omg_{0}\subset \lbrace x_{1}<3\rbrace $ from the previous corollary satisfy $ |\Omg_{0}|=|\br{\Omg}| $ as well, then for any $ t\geq0 $, we have
	\begin{equation}\label{eq_comgrowth}
		\bigg|\frac{d}{dt}k(1_{\Phi_{t}(\Omg_{0})})-\frac{d}{dt}k(1_{\br{\Phi}_{t}(\br{\Omg})})\bigg|\leq C_{5}|\Omg_{0}\triangle \br{\Omg}|^{\frac{1}{4}}.
	\end{equation}
\end{lem}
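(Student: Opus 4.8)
The plan is to reduce both growth rates to velocity integrals of the form $\frac{1}{2\pi}\int u_{2}$, and then to bound their difference using the pointwise velocity estimate of Corollary~\ref{cor_uift} together with the set-difference bound \eqref{eq_stabi6C1} recorded in the proof of that corollary. The normalization $|\Omg_{0}|=|\br{\Omg}|=2\pi$ in the hypothesis is exactly what makes the two normalizing constants agree with the steady case \eqref{eq_derivofRt'1}.

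First I would establish the identity
\[
	\frac{d}{dt}k(1_{\Phi_{t}(\Omg_{0})})=\frac{1}{2\pi}\int_{\Omg_{t}}u_{2}(t,z)\,dz,
\]
in parallel with \eqref{eq_derivofRt'1}. Since $u_{ext}(t)$ is the periodic extension of the divergence-free field $u(t)$, it is itself divergence-free, so the flow map $\Phi_{t}$ preserves Lebesgue measure on $\bbR_{+}^{2}$; hence $\int_{\Phi_{t}(\Omg_{0})}1\,dx=|\Omg_{0}|=2\pi$ is constant in $t$ and may be pulled out of $k$. Changing variables $x=\Phi(t,a)$ gives $\int_{\Phi_{t}(\Omg_{0})}x_{2}\,dx=\int_{\Omg_{0}}\Phi_{2}(t,a)\,da$; differentiating under the integral (legitimate by the uniform boundedness and log-Lipschitz estimates on $u_{ext}$ from Section~3.2) and changing variables back yields $\frac{d}{dt}k(1_{\Phi_{t}(\Omg_{0})})=\frac{1}{2\pi}\int_{\Phi_{t}(\Omg_{0})}u_{ext,2}(t,x)\,dx$. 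Finally, $u_{ext,2}(t,\cdot)=u_{2}(t,Q(\cdot))$ is $2\pi$-periodic in $x_{2}$, so using the decomposition \eqref{eq_Omgt} of $\Phi_{t}(\Omg_{0})$ into the slabs $E_{t,k}$, each of which projects onto $\Omg_{t,k}$ by a vertical translation by $2k\pi$ without altering the periodic integrand, the integral over $\Phi_{t}(\Omg_{0})$ collapses to $\int_{\Omg_{t}}u_{2}(t,z)\,dz$.

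With this identity and the already-computed $\frac{d}{dt}k(1_{\br{\Phi}_{t}(\br{\Omg})})=\frac{1}{2\pi}\int_{\br{\Omg}}\br{u}_{2}\,dz$ from \eqref{eq_derivofRt'1}, I would split
\[
	\frac{d}{dt}k(1_{\Phi_{t}(\Omg_{0})})-\frac{d}{dt}k(1_{\br{\Phi}_{t}(\br{\Omg})})=\frac{1}{2\pi}\Big[\int_{\Omg_{t}}(u_{2}-\br{u}_{2})\,dz+\Big(\int_{\Omg_{t}}\br{u}_{2}\,dz-\int_{\br{\Omg}}\br{u}_{2}\,dz\Big)\Big].
\]
The first bracketed term is controlled by Corollary~\ref{cor_uift}: since $|u_{2}(t,z)-\br{u}_{2}(z)|\leq C_{4}|\Omg_{0}\triangle\br{\Omg}|^{\frac14}$ pointwise and $|\Omg_{t}|=2\pi$, it is at most $2\pi C_{4}|\Omg_{0}\triangle\br{\Omg}|^{\frac14}$ in absolute value. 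The second is the integral of $\br{u}_{2}$ over $\Omg_{t}\triangle\br{\Omg}$, so using $\big\|\br{u}_{2}\big\|_{L^{\ift}}=1$ (as $\br{u}_{2}(x)=(1-x_{1})_{+}$) it is at most $|\Omg_{t}\triangle\br{\Omg}|$, which by \eqref{eq_stabi6C1} is at most $8C_{1}|\Omg_{0}\triangle\br{\Omg}|^{\frac12}$. Since the hypothesis forces $|\Omg_{0}\triangle\br{\Omg}|\leq1$, we have $|\Omg_{0}\triangle\br{\Omg}|^{\frac12}\leq|\Omg_{0}\triangle\br{\Omg}|^{\frac14}$, and combining the two bounds gives \eqref{eq_comgrowth} with $C_{5}:=C_{4}+\tfrac{4C_{1}}{\pi}$.

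The step I expect to require the most care is the first displayed identity, specifically passing from the integral over the unbounded, possibly many-times-wrapped set $\Phi_{t}(\Omg_{0})\subset\bbR_{+}^{2}$ to the integral over $\Omg_{t}\subset S_{+}$. This rests on two facts from the framework of Section~3.2: the incompressibility of $u_{ext}$, which keeps the normalizing mass time-independent and lets $\frac{d}{dt}$ commute with the factor $\frac{1}{2\pi}$; and that $Q$ maps $\Phi_{t}(\Omg_{0})$ onto $\Omg_{t}$ measure-preservingly and essentially injectively, so that the slabs $\Omg_{t,k}$ genuinely partition $\Omg_{t}$ and the periodic integrand is integrated exactly once over each. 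Once this reduction is in place, the remaining inequalities are a direct application of Corollary~\ref{cor_uift} and \eqref{eq_stabi6C1}.
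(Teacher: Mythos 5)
Your proposal is correct and follows essentially the same route as the paper's proof: the same reduction of $\frac{d}{dt}k(1_{\Phi_{t}(\Omg_{0})})$ to $\frac{1}{2\pi}\int_{\Omg_{t}}u_{2}(t,x)\,dx$ via measure preservation, periodicity, and the decomposition \eqref{eq_Omgt}, followed by the same two-term split controlled by Corollary \ref{cor_uift} and \eqref{eq_stabi6C1} with $\|\br{u}_{2}\|_{L^{\ift}}=1$. The only cosmetic differences are that the paper records upper and lower bounds separately rather than bounding the absolute value directly, and the resulting constant ($2C_{1}+C_{4}$ versus your $C_{4}+\tfrac{4C_{1}}{\pi}$) differs immaterially.
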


\begin{proof}
	We fix $ t\geq0 $. Then following a similar procedure as in \eqref{eq_derivofRt'1}, we have
	\begin{equation*}\label{eq_comofOmgt}
		\begin{split}
			\frac{d}{dt}k(1_{\Phi_{t}(\Omg_{0})})&=
			\frac{d}{dt}\bigg[\bigg(\int_{\Phi_{t}(\Omg_{0})}1\ dx\bigg)^{-1}\cdot\int_{\Phi_{t}(\Omg_{0})}x_{2}dx\bigg]=
			\bigg(\int_{\Omg_{0}}1\ dx\bigg)^{-1}\cdot\int_{\Omg_{0}}\frac{d}{dt}\Phi_{2}(t,x)dx\\
			&=\frac{1}{2\pi}\int_{\Omg_{0}}u_{ext,2}\big(t,\Phi(t,x)\big)dx=\frac{1}{2\pi}\int_{\Phi_{t}(\Omg_{0})}u_{ext,2}(t,x)dx.
		\end{split}
	\end{equation*}
	Then taking $ N\in\bbN $ that satisfies $ \Phi_{t}(\Omg_{0})\subset\bbR_{>0}\times[-(2N+1)\pi,(2N+1)\pi) $ and using notations
	$$ E_{t,k}=\Phi_{t}(\Omg_{0})\cap\lbrace(2k-1)\pi\leq x_{2}<(2k+1)\pi\rbrace,\quad\Omg_{t,k}=E_{t,k}-\begin{pmatrix}
		0\\
		2k\pi
	\end{pmatrix},\quad k\in\lbrace-N,\cdots N\rbrace, $$
	from \eqref{eq_Omgt}, we have
	\begin{equation*}
		\begin{split}
			\frac{d}{dt}k(1_{\Phi_{t}(\Omg_{0})})&=\frac{1}{2\pi}\int_{\Phi_{t}(\Omg_{0})}u_{ext,2}(t,x)dx=\frac{1}{2\pi}\sum_{k=-N}^{N}\int_{E_{t,k}}u_{2}\big(t,Q(x)\big)dx\\
			&=\frac{1}{2\pi}\sum_{k=-N}^{N}\int_{E_{t,k}}u_{2}\big(t,x-(0,2k\pi)\big)dx=\frac{1}{2\pi}\sum_{k=-N}^{N}\int_{\Omg_{t,k}}u_{2}(t,x)dx
			=\frac{1}{2\pi}\int_{\Omg_{t}}u_{2}(t,x)dx.
		\end{split}
	\end{equation*}
	Now note that from the inequality \eqref{eq_stabi6C1}, we can get
	\begin{equation*}\label{eq_intofbruonOmgt}
		\begin{split}
			\bigg|\int_{\Omg_{t}}\br{u}_{2}(x)dx-\int_{\br{\Omg}}\br{u}_{2}(x)dx\bigg|&=\bigg|\int_{\Omg_{t}\setminus \br{\Omg}}\br{u}_{2}(x)dx-\int_{\br{\Omg}\setminus \Omg_{t}}\br{u}_{2}(x)dx\bigg|\leq\int_{\Omg_{t}\triangle \br{\Omg}}|\br{u}_{2}(x)|dx\\
			&\leq\big\|\br{u}_{2}\big\|_{L^{\ift}(S_{+})}\cdot|\Omg_{t}\triangle \br{\Omg}|\leq 8C_{1}|\Omg_{0}\triangle \br{\Omg}|^{\frac{1}{2}}.
		\end{split}
	\end{equation*}
	Using this and \eqref{eq_u1u2bd} from Corollary \ref{cor_uift}, we can obtain upper and lower bounds of $ \frac{d}{dt}k(1_{\Phi_{t}(\Omg_{0})}) $. For the upper bound, we have
	\begin{equation*}\label{eq_upperbdofcom}
		\begin{split}
			\frac{d}{dt}k(1_{\Phi_{t}(\Omg_{0})})&=\frac{1}{2\pi}\int_{\Omg_{t}}u_{2}(t,y)dy\leq\frac{1}{2\pi}\int_{\Omg_{t}}\big[\br{u}_{2}(x)+C_{4}|\Omg_{0}\triangle \br{\Omg}|^{\frac{1}{4}}\big]dx\\
			&\leq\frac{1}{2\pi}\bigg[\int_{\br{\Omg}}\br{u}_{2}(x)dx+8C_{1}|\Omg_{0}\triangle \br{\Omg}|^{\frac{1}{2}}+C_{4}|\Omg_{t}|\cdot|\Omg_{0}\triangle \br{\Omg}|^{\frac{1}{4}}\bigg]\\
			&\leq\frac{d}{dt}k(1_{\br{\Phi}_{t}(\br{\Omg})})+(2C_{1}+C_{4})|\Omg_{0}\triangle \br{\Omg}|^{\frac{1}{4}}.
		\end{split}
	\end{equation*}
	In the same way, the following lower bound can be derived: 
	\begin{equation*}\label{eq_lowerbdofcom}
		\begin{split}
			\frac{d}{dt}k(1_{\Phi_{t}(\Omg_{0})})\geq\frac{d}{dt}k(1_{\br{\Phi}_{t}(\br{\Omg})})-(2C_{1}+C_{4})|\Omg_{0}\triangle \br{\Omg}|^{\frac{1}{4}}.
		\end{split}
	\end{equation*}
	Finally, taking $ C_{5}:=2C_{1}+C_{4} $, we obtain \eqref{eq_comgrowth}.
\end{proof}

\subsection{Proof of Theorem \ref{thm_perim}}

We finish this paper by proving Theorem \ref{thm_perim}. 

\begin{proof}[Proof of Theorem \ref{thm_perim}]
	Let $ \Omg_{0}\subset S_{+} $ be a bounded, open 
	set with a smooth 
	boundary $ \rd\Omg_{0} $ that satisfies the following conditions (see Figure \ref{fig:patch}):
	\begin{equation}\label{eq_condiofOmg}
		\begin{split}
			(a)&{\;\;\text{there exists a closed curve}\;\gmm : [0,1]\To \br{S_{+}}\;\text{such that}}
			\quad {			 			 \gmm([0,1])=\rd\Omg_{0}\;\text{and}\;\gmm(0)=\gmm(1);}\\
			(b)&\;\; \text{length}(\rd\Omg_{0})\leq20;\\
			(c)&\;\;|\Omg_{0}|=|\br{\Omg}|;\\
			(d)&\;\;\dlt:=|\Omg_{0}\triangle \br{\Omg}|\leq\min\bigg\lbrace1,\bigg(\frac{1}{4(C_{4}+C_{5})}\bigg)^{4}\bigg\rbrace;\\
			(e)&\;\;k(1_{\Omg_{0}})=0;\\
			(f)&\;\;
			(0,0)\in\rd\Omg_{0}\cap \rd S_{+}.
		\end{split}
	\end{equation}
	{Note that the set $ \br{\Omg} $ does not satisfy the condition (a) due to disconnectedness
of its boundary $ \rd\br{\Omg}=(\lbrace0\rbrace\times\bbT)\cup (\lbrace 1\rbrace\times\bbT)$.} We fix $ t\geq0 $. We claim that there exist two points $ a=(a_{1},a_{2}),\ b=(b_{1},b_{2})\in\rd\Phi_{t}(\Omg_{0}) $ depending on $ t $ and an absolute constant $ C_{2}>0 $ such that they satisfy
	\begin{equation}\label{eq_claimthm2}
		a_{2}-C_{2}t\geq k(1_{\Phi_{t}(\Omg_{0})})\geq b_{2}.
	\end{equation}
	Once the above is shown, then 
	we obtain
	\begin{equation*}
		\text{length}(\rd\Omg_{t})
		\geq|a-b|\geq a_{2}-b_{2}
		\geq C_{2}t,
	\end{equation*}
	and the proof is done.
	
	We consider  
	the point $ 
	\Phi(t,(0,0)
)\in \overline{\bbR_{+}^{2}}=\mathbb{R}_{\geq0}\times\mathbb{R}$. 
	Then note that we have $ \Phi_{1}(t,(0,0))=0 $ because any point on the $ x_{2}- $axis stays on the axis along the flow map, due to the tangential boundary condition of $ u $ at $ x_{1}=0 $.
	Additionally, using the inequality \eqref{eq_u1u2bd} from Corollary \ref{cor_uift}, we get
	\begin{equation*}
		\begin{split}
			\Phi_{2}(t,(0,0)
			)&=\Phi_{2}(0,(0,0)
			)+\int_{0}^{t}\frac{d}{ds}\Phi_{2}(s,(0,0)
			)ds=0
			+\int_{0}^{t}u_{ext,2}\big(s,\Phi(s,(0,0)
			)\big)ds\\
			&=
			\int_{0}^{t}u_{2}\Big(s,Q\big(\Phi(s,(0,0))\big)\Big)ds\geq\int_{0}^{t}\Big[\br{u}_{2}\Big(Q\big(\Phi(s,(0,0))\big)\Big)-C_{4}\dlt^{\frac{1}{4}}\Big]ds.
		\end{split}
	\end{equation*}
	Then due to $ \Phi_{1}(s,(0,0)
	)=0 $, 
	we have the lower bound of $ \Phi_{2}(t,(0,0))
	 $:
	\begin{equation}\label{eq_brx2lowbd}
		\begin{split}
			\Phi_{2}(t,(0,0))
			&\geq\int_{0}^{t}\Big[\br{u}_{2}\Big(Q\big(\Phi(s,(0,0))\big)\Big)-C_{4}\dlt^{\frac{1}{4}}\Big]ds
			=\int_{0}^{t}\Big[\Big(1-Q_{1}\big(\Phi(s,(0,0))\big)\Big)-C_{4}\dlt^{\frac{1}{4}}\Big]ds\\
			&=\int_{0}^{t}\big[\big(1-\Phi_{1}(s,(0,0))\big)-C_{4}\dlt^{\frac{1}{4}}\big]ds=\int_{0}^{t}(1-C_{4}\dlt^{\frac{1}{4}})ds=(1-C_{4}\dlt^{\frac{1}{4}})t.
		\end{split}
	\end{equation}
	Similarly, we can find the upper bound of $ k(1_{\Phi_{t}(\Omg_{0})}) $ using the inequality \eqref{eq_comgrowth} from Lemma \ref{lem_comgrowth} and the calculation \eqref{eq_derivofRt'1}:
	\begin{equation}\label{eq_k1Omguppbd}
		\begin{split}
			k(1_{\Phi_{t}(\Omg_{0})})&=k(1_{\Omg_{0}})+\int_{0}^{t}\frac{d}{ds}k(1_{\Phi_{s}(\Omg_{0})})ds\leq
			\int_{0}^{t}\bigg(\frac{d}{ds}k(1_{\br{\Phi}_{s}(\br{\Omg})})+C_{5}\dlt^{\frac{1}{4}}\bigg)ds=
			\bigg(\frac{1}{2}+C_{5}\dlt^{\frac{1}{4}}\bigg)t.
		\end{split}
	\end{equation}
	Then by the above bounds \eqref{eq_brx2lowbd}, \eqref{eq_k1Omguppbd}, and the condition $ (d) $ of \eqref{eq_condiofOmg}, we have
	\begin{equation*}\label{eq_x2kdiff}
		\begin{split}
			\Phi_{2}(t,(0,0)){-\frac{t}{4}}
			&\geq 
			(1-C_{4}\dlt^{\frac{1}{4}})t-\frac{t}{4}=
			\bigg(\frac{1}{2}-C_{4}\dlt^{\frac{1}{4}}\bigg)t+\frac{t}{4}\\
			&\geq
			\bigg(\frac{1}{2}-C_{4}\dlt^{\frac{1}{4}}\bigg)t+(C_{4}+C_{5})\dlt^{\frac{1}{4}}t\geq
			\bigg(\frac{1}{2}+C_{5}\dlt^{\frac{1}{4}}\bigg)t\geq k(1_{\Phi_{t}(\Omg_{0})}).
		\end{split}
	\end{equation*}
	We choose $ a=\Phi(t,(0,0))
	 $ and $ C_{2}=\frac{1}{4} $. It is only left to show that there exists a point $ b\in\rd\Phi_{t}(\Omg_{0}) $ such that
	\begin{equation}\label{eq_yt2}
		k(1_{\Phi_{t}(\Omg_{0})})\geq b_{2}.
	\end{equation}
	Indeed, if every point $ \xi\in\rd\Phi_{t}(\Omg_{0}) $ satisfies $ k(1_{\Phi_{t}(\Omg_{0})})<\xi_{2} $, then it leads to the following contradiction:
	\begin{equation*}
		k(1_{\Phi_{t}(\Omg_{0})})=
		\bigg(\int_{\Phi_{t}(\Omg_{0})}1\ dx\bigg)^{-1}\cdot\int_{\Phi_{t}(\Omg_{0})}x_{2}dx>\bigg(\int_{\Phi_{t}(\Omg_{0})}1\ dx\bigg)^{-1}
		\cdot k(1_{\Phi_{t}(\Omg_{0})})\cdot\int_{\Phi_{t}(\Omg_{0})}1\ dx
		=k(1_{\Phi_{t}(\Omg_{0})}).
	\end{equation*}
	This finishes the proof of the claim \eqref{eq_claimthm2}.
\end{proof}

$\bullet$ Data availability statement : Data sharing is not applicable to this article as no datasets were generated or analysed during the current study.

\subsection*{Acknowledgments}
 KC has been supported by the National Research Foundation of Korea (NRF-2018R1D1A1B07043065).
IJ has been supported  by the New Faculty Startup Fund from Seoul National University and the Samsung Science and Technology Foundation under Project Number SSTF-BA2002-04.

\subsection*{Conflict of interest}

The authors state that there is no conflict of interest.

\bibliography{Choi_Jeong_Lim_20220907_references}

\end{document}